\documentclass[11pt]{article}
\usepackage[left=1in, right=1in, top=1in]{geometry}
\usepackage[toc,page]{appendix}
\usepackage[utf8]{inputenc}
\usepackage{amsfonts}
\usepackage{mathrsfs}
\usepackage[english]{babel}
\usepackage{amsmath}
\usepackage{relsize}
\usepackage{lipsum}
\usepackage{xcolor}
\usepackage{amsthm}
\usepackage{tikz-cd}
\usepackage{extpfeil}
\usepackage{mathtools}
\usepackage{csquotes}
\usepackage{soul} 
\usepackage{amssymb}
\usepackage[normalem]{ulem}
\usepackage[nottoc,numbib]{tocbibind} 
\usepackage{graphicx}
\usepackage{authblk}
\usepackage{amscd} 
\newcommand{\Phisig}{\Phi_{\sigma}}
\newcommand{\ksig}{K_{\sigma}}
\newcommand{\ballsig}{\ball_{\sigma}}
\newcommand{\qonepull}{q_1^*}
\usepackage[colorlinks=true,citecolor=blue,linkcolor=black,urlcolor=black]{hyperref}

\newcommand{\cc}{\mc{C}}
\newcommand{\id}{\on{id}}

\newcommand{\Aut}{\on{Aut}}

\newcommand{\pull}{{}^*}

\newcommand{\fgam}{f_{\gamma}}
\newcommand{\Fgam}{F_{\gamma}}

\newcommand{\ctop}{C^{\on{top}}}
\newcommand{\ufrxt}{\underline{\Fr}_{X/T}}

\newcommand{\pan}{P^{\on{an}}}
\newcommand{\ball}{\mathbb{B}}

\newcommand{\ponepull}{p_1^*}

\newcommand{\mce}{\mc{E}}

\newcommand{\pr}{\on{pr}}

\newcommand{\bracket}{[ \ , \ ]}
\newcommand{\hra}{\hookrightarrow}

\newcommand{\phipull}{\phi^*}

\newcommand{\mcz}{\mc{Z}}

\newcommand{\mcl}{\mc{L}}

\newcommand{\pibos}{\pi_{\bos}}
\newcommand{\pipull}{\pi^*}

\newcommand{\fpull}{f^*}

\newcommand{\lra}[1]{\overset{#1}{\longrightarrow}}

\newcommand{\bos}{\on{bos}}

\newcommand{\xbos}{X_{\on{bos}}}

\newcommand{\tbos}{T_{\on{bos}}}

\newcommand{\oc}{\mc{O}_C}
\newcommand{\jj}{\mc{J}}

\newcommand{\ot}{\mc{O}_T}

\newcommand{\ox}{\mc{O}_X}

\newcommand{\mc}[1]{\mathcal{#1}}
\newcommand{\on}[1]{\operatorname{#1}}
\newcommand{\ov}[1]{\overline{#1}}
\newcommand{\sS}{\on{sSch}}

\newcommand{\pist}{\pi_*} 

\newcommand{\lgr}{\overset{\sim}{\longrightarrow}}
\newcommand{\bb}{\mathbb{C}}
\newcommand{\pp}{\mathbb{P}}
\newcommand{\z}{\mathbb{Z}}
\newcommand{\rr}{\mathbb{R}}

\newcommand{\mfr}{\mathfrak{M}_g}

\newcommand{\mcx}{\mc{X}}

\newtheorem{theorem}{Theorem}[section]
\newtheorem{lemma}[theorem]{Lemma}
\newtheorem{prop}[theorem]{Proposition}
\newtheorem{corollary}[theorem]{Corollary}
\newtheorem{definition}[theorem]{Definition}

\newcommand{\tg}{\mathfrak{T}_g}

\newcommand{\inv}{{}^{-1}}

\newcommand{\embproj}{\pp^{5g-6|6g-6}}

\newcommand{\duaxtsix}{\omega_{X/T}^{6}}

\newcommand{\mct}{\mc{T}}

\newcommand{\xra}{\xrightarrow}

\newcommand{\Fr}{\on{Fr}}

\newcommand{\irho}{i_{\rho}}

\title{Super Teichm\"uller Space}

\author{Nadia Ott\thanks{Department of Mathematics, University of Southern Denmark, Odense, Denmark. Email: \href{mailto:nmott@imada.sdu.dk}{\texttt{nmott@imada.sdu.dk}}.}}
\date{}

\begin{document}
\maketitle

\begin{abstract}
 The paper contains three results. First, we give a global quotient presentation of the supermoduli space of genus $g>1$ super Riemann surfaces (Corollary~\ref{global quotient}). Second, we prove that the moduli stack of marked super Riemann surfaces, which we call \emph{super Teichm\"uller space} and denote by $\tg$, is a complex analytic Deligne–Mumford superstack (Proposition~\ref{super teich is DM}). Third, we construct an étale atlas for $\tg$ whose underlying topological space is a disjoint union of $2^{2g}$ contractible unit balls (Theorem~\ref{teich theorem for super}).

\end{abstract}

\section{Introduction}
Teichm\"uller space $T_g$ is the moduli space of Riemann surfaces with a \emph{marking}. A marking is the isotopy class of an orientation-preserving homeomorphism from the Riemann surface to a fixed surface $\Sigma$ of genus $g>1$. An isomorphism of marked Riemann surfaces preserves the marking. Adding a marking kills all non-trivial automorphisms of the curve but does not add new parameters to the moduli problem.

There is a natural forgetful map from Teichm\"uller space to the moduli space $\mc{M}_g$ of genus $g>1$ curves:
\begin{equation}
\label{introduction: forgetting the marking map}
\pi: T_g \to \mc{M}_g
\end{equation}
whose fibers are torsors for the mapping class group of $\Sigma$. Thus $\mc{M}_g$ is a quotient of $T_g$ by the mapping class group.
This is useful because $T_g$ has much simpler geometry than the moduli space: it is a complex manifold of dimension $3g-3$ whose underlying topological space is homeomorphic to a contractible unit ball.

There are many constructions and definitions of Teichm\"uller space. Examples include Teichm\"uller's quasiconformal construction \cite{teichmuller1940extremale}, Grothendieck's functor-of-points construction \cite{Grothendieck1961}, and the hyperbolic approach, where $T_g$ is identified with the space of conjugacy classes of discrete faithful representations $\pi_1(\Sigma)\to\on{PSL}(2,\rr)$ compatible with the orientation of $\Sigma$ \cite{goldman1988topological}. These constructions are connected by the classical uniformization theorem.

Super Teichm\"uller theory asks how the classical picture extends to super Riemann surfaces, where spin structures, odd deformation parameters, and automorphisms all enter the moduli problem. The subject arose in superstring theory in the 1980s. Early contributions include Rabin's work on deformations \cite{rabin1987teichmuller}, the uniformization approach of Crane and Rabin \cite{CR}, and the construction of supermoduli spaces by LeBrun and Rothstein \cite{LR}. Later analytic approaches to families include those of Sachse \cite{sachse2009global} and Knevel \cite{knevel2011versal}. Penner and Zeitlin develop a decorated theory for punctured surfaces using super hyperbolic geometry \cite{PZ}.

These approaches emphasize different aspects of the subject, and comparing them requires attention to the chosen markings, the treatment of automorphisms, and the relevant universal properties. Bryant and Hodgkin \cite{bryant1993nielsen} discuss how different notions of marking lead to different versions of super Teichm\"uller space. From the moduli-theoretic perspective, Codogni and Viviani \cite{CV} construct the complex analytic Deligne--Mumford superstack of super Riemann surfaces.

In this paper, we study super Teichm\"uller space $\tg$ as the moduli stack of marked families of super Riemann surfaces. Our aim is to give a construction using standard algebro-geometric language and methods, following Arbarello and Cornalba's construction of ordinary Teichm\"uller space \cite{AC}, in the spirit of Grothendieck's functor-of-points approach. We use moduli of framed super Riemann surfaces to construct Kuranishi families and an \'etale atlas for $\tg$, making its relationship with the supermoduli stack explicit.

We now describe the contents of the paper.

In Section~\ref{sec:frames}, we construct the ambient superscheme containing the bases of the Kuranishi families that we ultimately prove form an \'etale atlas for $\tg$. This ambient superscheme is the moduli space of \emph{framed super Riemann surfaces}. A framed super Riemann surface is a super Riemann surface equipped with a basis of global sections of its six-canonical line bundle $\omega_X^{\otimes 6}$, which provides an embedding into superprojective space $\embproj$.

This suggests that the six-canonical \emph{locus} $\mc{H}_{g,6}$ in the Hilbert superscheme $\mc{H}_g$ of closed subschemes of $\embproj$ \cite{bruzzo2023notes} may serve as a parameter space for super Riemann surfaces. However, the situation is more complicated: the six-canonical locus is not known to be representable. The basic obstruction is the presence of extra odd global sections along a certain divisor of super Riemann surfaces.
This presented the main technical difficulty of the paper, as the construction in \cite{AC} uses the tricanonical locus as the ambient space in which to define Kuranishi families.

The resolution was to work directly with the moduli space of framed super Riemann surfaces. We use the Hilbert superscheme and its superconformal enhancement to prove that this moduli space is represented by a smooth superscheme $P$ of finite type over $\bb$ (Proposition~\ref{prop:P-superscheme}).

We also provide a global quotient construction of supermoduli space (Corollary~\ref{global quotient}). As far as we are aware, the only global quotient presentation that currently exists in the literature is in genus $0$ with Ramond punctures \cite{ott2023supermoduli}.   Codogni and Viviani \cite{CV} explicitly raised the question of a construction using Hilbert superschemes of pluricanonically embedded super Riemann surfaces; our construction using framed surfaces follows this direction.

We then pass to the super analytic category in Section~\ref{section: super Teich space}. The analytifications of the framed moduli space $P$ and the projective linear supergroup are complex supermanifolds.
We show that every super Riemann surface of genus $g>1$ admits a Kuranishi family (Lemma~\ref{slice is kuranishi}). This result was already established in \cite{CV}, but we give a new proof using local slices in $P$, analogous to the construction in \cite{AC}.
In Section~\ref{sec: definition of STM}, we define marked super Riemann surfaces as super Riemann surfaces equipped with an isotopy class of orientation-preserving homeomorphisms $f: X \lgr \Sigma$. We define super Teichm\"uller space $\tg$ as the moduli groupoid of marked families.

Like marked spin curves, a marked super Riemann surface has exactly one non-trivial automorphism, the \emph{fermionic reflection}. Locally, it acts by sign change on the odd coordinate, $\theta \mapsto -\theta$. In the spin curve description, this is the automorphism $-\id_L$ of the spin structure.

The presence of non-trivial automorphisms already suggests that $\tg$, unlike its ordinary counterpart, is not represented by a complex supermanifold. Nevertheless, it has relatively simple geometry: it is a Deligne--Mumford superstack with an \'etale atlas given by bases of Kuranishi families (Proposition~\ref{super teich is DM}).

There is a natural forgetful map
\[
\pi: \tg \to \mfr
\]
from $\tg$ to the supermoduli stack $\mfr$ of genus $g>1$ super Riemann surfaces. Ordinary Teichm\"uller space has a particularly simple topology, namely that of an open ball. In Section~\ref{sec:teichmuller-theorem}, we prove that $\tg$ admits an \'etale atlas whose underlying topological space is the disjoint union of $2^{2g}$ unit balls (Theorem~\ref{teich theorem for super}).

We prove this statement in three steps. First, we show that the moduli stack $ST_g'$ of continuous families of marked spin curves admits a representable \'etale cover by the disjoint union of $2^{2g}$ unit balls, indexed by the distinct spin structures on a fixed central fiber.
We then use the ordinary Teichm\"uller homeomorphism to define an analytic structure on the corresponding family and show that the resulting classifying map into the moduli stack $ST_g$ of marked spin families is \'etale. It is automatically representable because $ST_g$ is Deligne--Mumford.
Finally, we apply unique \'etale lifting along the representable closed immersion $ST_g \hra \tg$ to obtain an \'etale atlas for $\tg$ whose underlying topological space is the disjoint union of these unit balls.
\medskip

\emph{Future directions.} We would like to compare our construction with the hyperbolic approach to super Teichm\"uller theory, including the decorated theory of Penner and Zeitlin \cite{PZ}. 

\section{Preliminaries}
\label{sec:preliminaries}

\subsection{Conventions}
\label{sec:conventions}

Throughout, $g>1$. In the algebraic portion of this paper (Section~\ref{sec:frames}) we work over $\bb$, in the Zariski or \'etale topology. In the analytic part, we use complex analytic superspaces and the topology of open covers. 

For a superalgebra $A$, we write $A=A_{\ov{0}}\oplus A_{\ov{1}}$. Morphisms preserve the grading unless stated otherwise, and $\Pi$ denotes parity reversal. We write $J$ for the ideal generated by the odd part, and use the same notation for the corresponding ideal sheaf. The bosonic reduction of a superspace $X$ is
\[
X_{\bos}=(|X|,\mc{O}_X/J).
\]
For a functor $\mcx$ on superschemes, $\mcx_{\bos}$ denotes its restriction to ordinary schemes $\on{Sch}\subset\sS$; we use the analogous convention for analytic superspaces and for categories fibered in groupoids.

For a family $\pi:X\to T$ of supercurves, we write $\omega_{X/T}$ for its relative dualizing (Berezinian) bundle. We use the quotient convention for projective superspace: $\mathbb{P}_T(\mce)$ parametrizes locally free quotients of rank $1|0$ of $\mce$.

\subsection{Super Riemann surfaces and spin curves}
\label{sec:srs-spin}

A \emph{supercurve} $X$ is a smooth, proper, and connected superscheme over $\bb$ of dimension $(1|1)$ such that its bosonic reduction $C:= \xbos=(|X|, \ox/\jj)$ is an ordinary smooth, connected curve over $\bb$.
The \emph{genus} $g$ of a supercurve $X$ is the genus $g$ of its underlying bosonic curve $C$. From now on, we assume $g \ge 2$.

    A \emph{family of genus $g$ supercurves over a superscheme $T$} is a smooth, proper morphism of superschemes $\pi: X \to T$ of relative dimension $(1|1)$ such that each geometric fiber $X_t$ is a genus $g$ supercurve over its residue field $k(t)$ and such that the bosonic reduction 
    \[ \pibos: \xbos\to \tbos\]
    is a family of ordinary smooth curves of genus $g$ over $\tbos$.
    An \emph{isomorphism of families over the same base $T$} from $X$ to $X'$ is an isomorphism of superschemes $\phi: X \to X'$ commuting with the projections to $T$.  
\medskip

A genus $g$ \emph{super Riemann surface} is a genus $g$ supercurve $X$ over $\bb$ equipped with a \emph{superconformal structure}, that is, a rank $(0|1)$ locally free subsheaf $\mc{D} \subset \mc{T}_X$ of the tangent sheaf which is maximally non-integrable in the sense that the map
\begin{equation} \label{beta comp}
\mc{D}^{\otimes 2} \lra{[\ , \ ]} \mct_X \to \mct_X / \mc{D},
\end{equation}
(where $\bracket$ is the supercommutator) 
 is an isomorphism of locally free sheaves on $X$. 
This gives a short exact sequence: 
\begin{equation}
    \label{sesofdist}
    0 \lra{{}} \mc{D} \lra{{}} \mct_X \lra{{}} \mc{D}^{\otimes 2} \lra{{}} 0. 
\end{equation}

    A \emph{family of super Riemann surfaces over a superscheme $T$} is a family of supercurves $X\to T$ equipped with a relative superconformal structure $\mc{D} \subset\mct_{X/T}$.
            An \emph{isomorphism of super Riemann surfaces}, called a \emph{superconformal isomorphism}, is a $T$-linear isomorphism of supercurves $\phi:  X \lgr Y$ such that  
            \[
                d\phi(\mc{D}_X) = \phi^*\mc{D}_Y
                \]
                as subsheaves of $\phi^*\mct_{Y/T}$:
              here $d\phi: \mct_{X/T} \lgr \phi^* \mct_{Y/T}$ denotes the induced isomorphism on relative tangent sheaves.

We denote the moduli stack of genus $g$ super Riemann surfaces by $\mfr$. In both the algebraic and analytic categories, it is a smooth, separated Deligne--Mumford superstack of dimension $3g-3|2g-2$. For the algebraic construction, see \cite{ott2026algebraicity}; for the analytic statement, see \cite[Theorem~5.3]{CV}.

Let $T$ be an ordinary scheme or analytic space. A family of \emph{spin curves} is a triple $(\pi:C\to T,L,\alpha)$, where $\pi$ is a smooth proper family of connected genus $g$ curves, $L$ is a line bundle on $C$, and
\[
\alpha:L^{\otimes2}\lgr K_{C/T}.
\]
The same definition applies in the analytic category, with holomorphic line bundles and isomorphisms. An isomorphism from $(C/T,L,\alpha)$ to $(C'/T,L',\alpha')$ consists of an isomorphism $\phi:C\lgr C'$ over $T$ and an isomorphism $\theta:\phi^*L'\lgr L$ satisfying
\[
\alpha\circ\theta^{\otimes2}
=d\phi^*\circ\phi^*\alpha',
\qquad
 d\phi^*:\phi^*K_{C'/T}\lgr K_{C/T}.
\]
There is a well-known equivalence between families of spin curves and families of super Riemann surfaces over purely bosonic bases; see \cite[Lemma~4.2]{ott2026algebraicity} for the algebraic version and \cite[Proposition~4.2]{CV} for the analytic version.
One consequence of this equivalence that we use throughout is the following expression for the structure sheaf of a super Riemann surface as the direct sum $\mc{O}_X=\mc{O}_C\oplus\Pi L$ where $L$ is the spin structure determined by the superconformal structure.

\section{Moduli of framed super Riemann surfaces}
\label{sec:frames}

Let $\pi:X\to T$ be a family of genus $g>1$ super Riemann surfaces, and set
\[
\mcl:=\duaxtsix,
\qquad
\mce:=\pist\mcl.
\]

\begin{lemma}\label{lemma: properties of six-canonical bundle}
The line bundle $\mcl$ is strongly relatively very ample. Furthermore, $\mce$ is locally free of rank $5g-5|6g-6$, and its formation commutes with base change.
\end{lemma}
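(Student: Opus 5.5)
The plan is to reduce everything to a computation on the fibers over closed points, where a super Riemann surface is split, and then propagate to arbitrary bases using vanishing of $H^1$ and the standard cohomology-and-base-change machinery for flat proper morphisms of superschemes.

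\textbf{Step 1 (fiberwise computation).} Let $X$ be a genus $g$ super Riemann surface over $\bb$ with bosonic reduction $C$ and spin structure $L$, so that $\mc{O}_X=\mc{O}_C\oplus\Pi L$ and $L^{\otimes 2}\cong K_C$. From the exact sequence \eqref{sesofdist} one has $\omega_X\cong\on{Ber}(\mct_X)^{-1}\cong\mc{D}^{-1}$. I will check that $\mc{D}^{-1}|_C\cong L$, up to the convention for $\mc{D}$. Granting this, $\omega_X|_C\cong L$ and $\mcl|_C\cong L^{\otimes 6}\cong K_C^{\otimes 3}$.

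Since $X$ is split, $\mcl\cong \mcl|_C\otimes\mc{O}_X$. It is therefore an extension with even part $K_C^{\otimes 3}$ and odd part $\Pi(K_C^{\otimes 3}\otimes L)$. These have degrees $6g-6$ and $7g-7$, both greater than $2g-2$ because $g>1$. Hence $H^1(X,\mcl)=0$, and Riemann--Roch gives
\[
h^0=(6g-6-g+1)\,\big|\,(7g-7-g+1)=5g-5\,\big|\,6g-6 .
\]

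\textbf{Step 2 (very ampleness on fibers).} I interpret ``strongly very ample'' as follows: the global sections generate $\mcl$, the even sections embed $C$, and the odd sections generate the odd part $J\otimes\mcl$. For the even part, $K_C^{\otimes 3}$ is very ample on $C$ since $g\ge 2$. For the odd part, $L^{\otimes 7}$ has degree $7g-7\ge 2g$, so it is globally generated. Because all $H^1$ vanish, the section map on each graded piece is surjective. It follows that the map $X\to\embproj$ given by a basis of $H^0$ is a closed embedding: it is a closed embedding on $C$, and the odd coordinates generate $J/J^2=\Pi L$. I would invoke the standard criterion that a morphism of superschemes which is a closed immersion on the bosonic reduction and surjective on $J/J^2$ is a closed immersion.

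\textbf{Step 3 (families).} For a family $\pi:X\to T$, the morphism $\pi$ is flat and proper, and $H^1(X_t,\mcl_t)=0$ for every point $t$. Cohomology and base change for superschemes then gives the remaining conclusions. Namely, $R^1\pist\mcl=0$ universally, $\pist\mcl$ is locally free, and its formation commutes with base change. The rank is then constant and equal to $5g-5|6g-6$ by Step 1.

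Strong relative very ampleness follows by the same argument. Surjectivity of $\pi^*\mce\to\mcl$ holds after base change to each point, hence holds globally by Nakayama. The induced map $X\to\pp_T(\mce)$ is a closed immersion on each fiber. Since $X$ is flat over $T$, a proper $T$-morphism that is a closed immersion on all fibers is a closed immersion, by the fiberwise criterion.

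\textbf{Main obstacle.} I expect Step 3 to be the main obstacle. It requires the super versions of cohomology and base change and of the fiberwise criterion for closed immersions. The approach I would try first is to filter by powers of the odd ideal of the base and reduce to the bosonic base $T_{\bos}$, where these statements are classical. One can then lift along nilpotent thickenings, using the vanishing of $H^1$, which kills every obstruction to lifting sections.

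A secondary point to verify carefully is the identification $\omega_X|_C\cong L$, which fixes the parity and degree bookkeeping in Step 1.
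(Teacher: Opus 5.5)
Your proposal is correct and follows essentially the same route as the paper: on each fiber you use $\mcl|_{X_t}\cong L_t^{6}\oplus\Pi L_t^{7}$, kill $H^1$ by degree, get the rank $5g-5|6g-6$ from Riemann--Roch, and conclude local freeness and compatibility with base change by cohomology and base change. The only difference is that you sketch a proof of strong relative very ampleness, whereas the paper simply cites it from \cite{ott2026algebraicity}; your sketch combines the tricanonical embedding of $C$, generation of $J$ by the odd sections, and then Nakayama together with a fiberwise closed-immersion criterion.
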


\begin{proof}
The line bundle $\mcl$ is strongly relatively very ample (see, e.g., \cite{ott2026algebraicity}). 

For every point $t \in T$, let $X_t$ denote the fiber, and let $(C_t,L_t)$ be the spin curve determined by $X_t$. Over a bosonic base, we have the well-known identifications
\[
\mc{O}_{X_t}=\mc{O}_{C_t}\oplus\Pi L_t,
\qquad
\mcl\vert_{X_t}=L_t^6\oplus\Pi L_t^7,
\qquad
L_t^2\cong\omega_{C_t}.
\]
Since $\deg L_t=g-1$, both summands of $\mcl\vert_{X_t}$ have vanishing $H^1$. The ordinary Riemann--Roch formula gives
\[
h^0(C_t,L_t^6)=5g-5,
\qquad
h^0(C_t,L_t^7)=6g-6.
\]
Cohomology and base change now show that $\mce$ is locally free of this rank and commutes with base change.

\end{proof}

Henceforth, we set
\[
V=\bb^{5g-5|6g-6},
\qquad
V_T=V\otimes_{\bb}\ot,
\qquad
\mce_{X/T}=\pist\mcl_{X/T}.
\]
\begin{definition}
We define a \emph{frame} on $X/T$ to be a global section of the following quotient sheaf:
\[
\underline{\on{Isom}}_{\ot}(V_T,\mce_{X/T})
\big/\mc{O}_{T,0}^{*},
\]
where invertible even functions on the base act by scalar multiplication on $V_T$. We write $\Fr(X/T)$ for the set of all frames on $X/T$.
\end{definition}

Every frame $\rho\in\Fr(X/T)$ has local representatives $(T_i,\rho_i)$, where $\{T_i\}$ is an open cover of $T$ and
\[
\rho_i:V_{T_i}\lgr\mce_{X/T}\vert_{T_i}
\]
is an isomorphism. On $T_i\cap T_j$, these representatives satisfy $\rho_i=\lambda_{ij}\rho_j$ for some invertible even function $\lambda_{ij}$.
Two collections $(T_i,\rho_i)$ and $(T_i',\rho_i')$ represent the same frame if their restrictions to a common refinement differ by invertible even scalar functions.

For any morphism $f:S\to T$ of superschemes, we have a pullback map
\[
\fpull:\Fr(X/T)\longrightarrow\Fr(X_S/S)
\]
defined as follows. Given local representatives $(T_i,\rho_i)$ of $\rho$, set $S_i:=f^{-1}(T_i)$ and $f_i:=f|_{S_i}:S_i\to T_i$. The pulled-back frame is represented by $(S_i,f_i^*\rho_i)$, where
\[
f_i^*\rho_i:V_{S_i}\lgr f_i^*\mce_{X_{T_i}/T_i}
\cong\mce_{X_{S_i}/S_i}.
\]
This is well-defined because pullback commutes with scalar multiplication.
Using these pullback maps, we define the \emph{framing sheaf} on the big Zariski site of $T$:
\[
\underline{\Fr}_{X/T}:(\sS/T)^{\on{op}}\longrightarrow\on{Set},
\qquad
(S\to T)\longmapsto\Fr(X_S/S).
\]

The supergroup scheme $G=\on{PGL}(V)$ acts naturally on the framing sheaf on the right:
\begin{equation}\label{action map on frame}
\mu_X:\underline{\Fr}_{X/T}\times_T G_T
\longrightarrow\underline{\Fr}_{X/T}.
\end{equation}
For any morphism $S\to T$, we write this action as
\begin{equation}\label{s point of frame action}
\rho\cdot g:=\rho\circ g,
\qquad
\rho\in\Fr(X_S/S),\quad g\in G_T(S).
\end{equation}
Here composition is understood using local representatives: choose $(S_i,\rho_i)$ representing $\rho$ and lifts $\tilde g_i\in\on{GL}(V)(S_i)$ of $g|_{S_i}$. Then $\rho\cdot g$ is represented by $(S_i,\rho_i\circ\tilde g_i)$. Changing the representatives or the lifts multiplies these compositions by invertible even scalars, so the resulting frame is well-defined.

\begin{lemma}\label{lem:frame-torsor}
For any family of genus $g>1$ super Riemann surfaces
$\pi:X\to T$, its framing sheaf $\underline{\Fr}_{X/T}$
is a $G_T$-torsor on the big Zariski site of $T$ and is
represented by a smooth superscheme of finite type over $T$.
\end{lemma}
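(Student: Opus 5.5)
We first show that $\underline{\Fr}_{X/T}$ is a $G_T$-torsor, and then deduce representability from the fact that $G_T$ is representable.

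\textbf{Freeness and transitivity.} By Lemma~\ref{lemma: properties of six-canonical bundle}, $\mce_{X/T}$ is locally free of rank $5g-5|6g-6$, and its formation commutes with base change. Hence for every $S\to T$ the sheaf $\underline{\on{Isom}}_{\mc{O}_S}(V_S,\mce_{X_S/S})$ is a right torsor under $\on{GL}(V)_S$: the group acts by precomposition, simply transitively on local sections. Now pass to the quotient by $\mc{O}_{S,0}^*$.

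For transitivity, take two frames $\rho,\rho'$ on $S$ with local representatives $\rho_i,\rho'_i$ over a common cover. Set $\tilde g_i:=\rho_i^{-1}\circ\rho'_i\in\on{GL}(V)(S_i)$. On overlaps the $\tilde g_i$ agree up to even scalars, so they glue to a section $g\in G(S)$ with $\rho\cdot g=\rho'$. Here we use that $G=\on{PGL}(V)$ is the Zariski sheafification of $\on{GL}(V)/\mathbb{G}_m$, where $\mathbb{G}_m$ is the even scalars. This is the standard description for $\on{PGL}$ of a free module, and it holds for supergroups because the center of $\on{GL}(V)$ consists of the even scalars.

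For freeness, suppose $\rho\cdot g=\rho$. Locally this says $\rho_i\circ\tilde g_i=\lambda_i\rho_i$, so $\tilde g_i=\lambda_i$ is a scalar and $g$ is trivial.

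\textbf{Local triviality.} Since $\mce_{X/T}$ is locally free, there is a Zariski open cover $\{T_i\}$ of $T$ on which $\mce_{X/T}\vert_{T_i}\cong V_{T_i}$. Over each $T_i$ this gives a section $\rho_i\in\Fr(X_{T_i}/T_i)$, so $\underline{\Fr}_{X/T}\vert_{T_i}\cong G_{T_i}$ via $g\mapsto\rho_i\cdot g$. Thus $\underline{\Fr}_{X/T}$ is a Zariski-locally trivial $G_T$-torsor. It is a sheaf on the big Zariski site because frames were defined as global sections of a quotient sheaf, and pullback is compatible with restriction.

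\textbf{Representability.} $G=\on{PGL}(V)$ is a smooth affine supergroup scheme of finite type over $\bb$, for example as the open subscheme $\{\on{Ber}\neq 0\}$-type locus in $\pp(\on{End}V)$, or as the quotient $\on{GL}(V)/\mathbb{G}_m$. The trivializations $\underline{\Fr}\vert_{T_i}\cong G_{T_i}$ give representing superschemes $G_{T_i}$. On $T_i\cap T_j$ they are glued by left multiplication by the transition element $g_{ij}$ defined by $\rho_j=\rho_i\cdot g_{ij}$. The $g_{ij}$ satisfy the cocycle condition, so the $G_{T_i}$ glue along open subschemes to a superscheme $F\to T$ representing the framing sheaf. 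Smoothness and finite type are local on $T$, and $G_{T_i}\to T_i$ has both properties. Hence $F\to T$ is smooth and of finite type.

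\textbf{Main obstacle.} The step needing most care is the transitivity/descent step. There one must check that $\on{PGL}(V)$ really represents the sheafified quotient of $\on{GL}(V)$ by even invertible scalars in the super setting, so that the locally defined $\tilde g_i$ glue to an $S$-point of $G$. I would verify this using the functor of points of $\pp(\on{End} V)$: an $S$-point there is a line subbundle locally generated by an invertible endomorphism, which matches exactly the data of the $\tilde g_i$ up to scalars.
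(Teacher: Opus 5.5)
Your proposal is correct and follows the paper's proof essentially step by step. It gets local non-emptiness from local freeness and base change of $\mce_{X/T}$, freeness and transitivity from the scalar argument on local lifts $\tilde g_i$, and representability by gluing the trivial torsors $G\times T_i$ along left multiplication by transition elements satisfying the cocycle condition, with smoothness and finite type checked locally on $T$. The one point you make explicit that the paper uses tacitly is that $\on{PGL}(V)$ represents the Zariski sheafification of $\on{GL}(V)$ modulo even scalars, so that locally defined projective classes glue to a section of $G$. Your plan to verify this via the invertible locus in $\pp(\on{End}V)$ is the right one, though that locus is cut out by $\det A\cdot\det D\neq 0$ on the diagonal blocks, not by the Berezinian, which is not a polynomial.
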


\begin{proof}
We need to check that $\ufrxt$ satisfies the two conditions
of a $G_T$-torsor on the big Zariski site.

\medskip
\emph{Locally non-empty:}
We must show that for every morphism $f:S\to T$ there exists
a Zariski cover $\bigcup_i S_i=S$ such that
$\ufrxt(S_i)=\Fr(X_{S_i}/S_i)\neq\emptyset$. This is equivalent to giving an open cover of $S$ on which 
the vector bundle $\fpull\mce=\mc{E}_{X_S/S}$ is trivial, which exists because the bundle is locally free.
\medskip

\emph{$G_T$ acts freely and transitively:}
Suppose $S\to T$ is a morphism for which
$\ufrxt(S)\neq\emptyset$.
Let $\rho\in\ufrxt(S)$, and suppose that $\rho\cdot g=\rho$
for some $g\in G_T(S)$.
Choose local representatives $(S_i,\rho_i)$ of $\rho$ and lifts
$\tilde{g}_i\in\on{GL}(V)(S_i)$ of $g\vert_{S_i}$. 
Since $\rho\cdot g=\rho$, there are even invertible functions $\lambda_i$ on $S_i$ such that
\[
\rho_i\circ\tilde{g}_i=\lambda_i\rho_i.
\]
Composing with $\rho_i^{-1}$ gives
$\tilde{g}_i=\lambda_i\id_{V_{S_i}}$.
Hence $g\vert_{S_i}=\id$ for every $i$, and thus $g=\id$ on $S$.

For transitivity, let $\rho,\rho'\in\ufrxt(S)$, and choose representatives
$(S_i,\rho_i)$ and $(S_i,\rho_i')$.
Set
$\tilde{g}_i:=\rho_i'^{-1}\circ\rho_i
\in\on{Aut}(V_{S_i})$, and let $g_i\in G_T(S_i)$
be its projective class. 
Changing the representatives multiplies $\tilde{g}_i$
by an even invertible scalar, so it does not change $g_i$.

On $S_i\cap S_j$, write
$\rho_i=\lambda_{ij}\rho_j$ and
$\rho_i'=\lambda_{ij}'\rho_j'$, where
$\lambda_{ij}$ and $\lambda_{ij}'$ are even invertible
functions. Then
\[
\tilde{g}_i
=
(\lambda_{ij}')^{-1}\lambda_{ij}\tilde{g}_j.
\]
Thus $g_i$ and $g_j$ agree on $S_i\cap S_j$ and glue to
$g\in G_T(S)$.
The identities $\rho_i'\circ\tilde{g}_i=\rho_i$ give
$\rho'\cdot g=\rho$.

\medskip
For representability, take $S=T$ and choose a Zariski cover
$\bigcup_i T_i=T$ with frames $\rho_i\in\ufrxt(T_i)$.
Freeness and transitivity give isomorphisms of sheaves
\[
\theta_i:G\times T_i\xrightarrow{\sim}\ufrxt\vert_{T_i}.
\]

Write $T_{ij}:=T_i\cap T_j$.
On $T_{ij}$, let $g_{ji}\in G(T_{ij})$ be the unique
section satisfying $\rho_j\cdot g_{ji}=\rho_i$.
The transition map $\theta_j^{-1}\circ\theta_i$ is
left multiplication by $g_{ji}$, hence is an isomorphism
of superschemes
$\psi_{ji}:G\times T_{ij}\to G\times T_{ij}$.
On $T_i\cap T_j\cap T_k$, we have
\[
\rho_k\cdot(g_{kj}g_{ji})
=
\rho_j\cdot g_{ji}
=
\rho_i
=
\rho_k\cdot g_{ki}.
\]
Freeness gives $g_{kj}g_{ji}=g_{ki}$, so
$\psi_{kj}\circ\psi_{ji}=\psi_{ki}$.
Also $\psi_{ii}=\id$ and $\psi_{ij}=\psi_{ji}^{-1}$.

The superschemes $G\times T_i$ therefore glue along the
maps $\psi_{ji}$ to a superscheme $Y\to T$.
The identities $\theta_j\circ\psi_{ji}=\theta_i$ show that
the $\theta_i$ glue to an isomorphism
$\on{Hom}_T(-,Y)\cong\ufrxt$.
Thus $Y$ represents $\ufrxt$.

Finally, each restriction $Y\times_T T_i\to T_i$ is
isomorphic to $G\times T_i\to T_i$, which is smooth and
of finite type because $G$ is smooth and of finite type.
Both properties are Zariski local on the target, so
$Y\to T$ is smooth and of finite type.
\end{proof}

Our next goal is to define framed super Riemann surfaces and
their moduli. Before doing so, we explain how framing sheaves
transform under superconformal isomorphisms.
\medskip

Let $\pi:X\to T$ and $\pi':X'\to T$ be two families of super
Riemann surfaces, and let $\phi:X\lgr X'$ be a superconformal
isomorphism over $T$. We define an induced isomorphism
\[
\phipull:\underline{\Fr}_{X'/T}\lgr\ufrxt.
\]
Write $\gamma_\phi:\phipull\mcl_{X'/T}\lgr\mcl_{X/T}$
for the canonical isomorphism, and define
\[
c_\phi:
\mce_{X'/T}
=\pi'_*\mcl_{X'/T}
\lgr\pi_*\phipull\mcl_{X'/T}
\xrightarrow{\pi_*\gamma_\phi}
\pi_*\mcl_{X/T}
=\mce_{X/T}.
\]
The first arrow is induced by pulling back sections along
$\phi$, using $\pi'\circ\phi=\pi$.

For any morphism $f:S\to T$ and any frame
$\rho'\in\underline{\Fr}_{X'/T}(S)$, choose representatives
$(S_i,\rho_i')$. Define $\phipull\rho'$ to be the frame in
$\ufrxt(S)$ represented by
$(S_i,(\fpull c_\phi)\vert_{S_i}\circ\rho_i')$.
Multiplying $\rho_i'$ by an even invertible function
multiplies its image by the same function. Thus this
definition is independent of the representatives. The same construction for
$\phi^{-1}$ gives its inverse.

\begin{definition}
A \emph{family of framed super Riemann surfaces over $T$}
is a triple $(\pi:X\to T,D,\rho)$, where $(\pi,D)$ is a
family of super Riemann surfaces over $T$ and
$\rho\in\ufrxt(T)$.
Thus a given family admits a framing precisely when
$\ufrxt(T)\neq\emptyset$.
An isomorphism from $(\pi,D,\rho)$ to $(\pi',D',\rho')$
is a superconformal isomorphism $\phi:X\lgr X'$ over $T$
satisfying $\phipull\rho'=\rho$.
\end{definition}
We let $P$ denote the category fibered in groupoids of
framed super Riemann surfaces, with pullback given by
pullback of the family and its frame.

\begin{lemma}
    $P$ is an \'etale superstack.
\end{lemma}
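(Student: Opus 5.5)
We must show that $P$ is a stack for the \'etale topology on $\sS$. That is, isomorphism functors are sheaves and descent data are effective. The plan is to reduce to the corresponding properties of $\mfr$, which the paper recalls is a Deligne--Mumford superstack (\cite{ott2026algebraicity}), together with the fact that frames are sections of a sheaf. Concretely, $P$ is the category fibered in groupoids of pairs $(X/T,\rho)$ with $\rho\in\ufrxt(T)$. The forgetful functor $P\to\mfr$ therefore has fiber over $X/T$ equal to the set $\ufrxt(T)$. By Lemma~\ref{lem:frame-torsor}, $\ufrxt$ is a $G_T$-torsor represented by a superscheme $Y\to T$, so it is in particular an fpqc, hence \'etale, sheaf.

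\emph{Step 1: Isom functors are sheaves.} Fix $(X,D,\rho)$ and $(X',D',\rho')$ over $T$. Then $\underline{\on{Isom}}_P$ is the subfunctor of $\underline{\on{Isom}}_{\mfr}(X,X')$ consisting of those $\phi$ with $\phipull\rho'=\rho$. The functor $\underline{\on{Isom}}_{\mfr}$ is an \'etale sheaf because $\mfr$ is a stack. The condition $\phipull\rho'=\rho$ is an equality of sections of the sheaf $\ufrxt$, and it is compatible with pullback because $c_\phi$ is formed from $\pi_*$ and the canonical isomorphism $\gamma_\phi$, both of which commute with base change by Lemma~\ref{lemma: properties of six-canonical bundle}. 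An equality of sections of a sheaf can be checked \'etale locally, so the subfunctor is again a sheaf.

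\emph{Step 2: Effectivity of descent.} Let $\{T_i\to T\}$ be an \'etale cover and let $(X_i,D_i,\rho_i)$ be objects over $T_i$, equipped with isomorphisms $\phi_{ij}$ over $T_{ij}$ satisfying the cocycle condition and $\phi_{ij}^*\rho_j=\rho_i$. Forgetting the frames and using that $\mfr$ is a stack, these glue to a family $(X,D)$ over $T$ with identifications $X|_{T_i}\cong X_i$. Transporting the frames along these identifications gives sections $\rho_i\in\Fr(X_{T_i}/T_i)=\ufrxt(T_i)$. The compatibility $\phi_{ij}^*\rho_j=\rho_i$ says exactly that these sections agree on $T_{ij}$. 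Since $\ufrxt$ is an \'etale sheaf on $\sS/T$, they glue to a unique $\rho\in\ufrxt(T)$, and $(X,D,\rho)$ is the required descended object.

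\emph{Main obstacle.} The one subtle point is that $\ufrxt$ is defined via Zariski-local representatives modulo even scalars, so it is a priori only known to be a Zariski sheaf. I will deal with this by using representability from Lemma~\ref{lem:frame-torsor}: since $\ufrxt\cong\on{Hom}_T(-,Y)$, it is a sheaf for the fpqc, and hence the \'etale, topology. Equivalently, $\underline{\on{Isom}}(V_T,\mce)/\mc{O}^*_{T,0}$ agrees with $\underline{\on{Isom}}(\mathbb{P}(V_T),\mathbb{P}(\mce))$, which is visibly an \'etale sheaf. The remaining checks are routine: functoriality of $\phipull$ with respect to composition, namely $(\psi\phi)^*=\phi^*\psi^*$, which follows from the cocycle property of $\gamma$; and compatibility of $c_\phi$ with base change, so that the transported frames satisfy the gluing compatibility.
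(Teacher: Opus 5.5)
Your proposal is correct and follows essentially the same route as the paper: you glue the underlying super Riemann surfaces using descent for $\mfr$, glue the transported frames using that $\ufrxt$ is an \'etale sheaf because it is represented by a superscheme (Lemma~\ref{lem:frame-torsor}), and descend isomorphisms by noting that $\phipull\rho'=\rho$ is an equality of sections of this sheaf, hence \'etale local. The one addition is your parenthetical identification of frames with $\underline{\on{Isom}}(\pp(V_T),\pp(\mce))$; it is not needed, would require its own justification for projective superbundles, and can simply be dropped.
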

\begin{proof}
To verify \'etale descent, let $\{T_i\to T\}$ be an
\'etale cover, and let $(X_i/T_i,D_i,\rho_i)$ be framed super Riemann surfaces equipped with isomorphisms on overlaps satisfying the cocycle condition. Since super Riemann surfaces satisfy \'etale descent, the underlying families glue to a family $(X/T,D)$ with isomorphisms
$\theta_i:X_{T_i}\lgr X_i$. The isomorphisms on the overlaps $T_i\times_T T_j$ preserve the frames, so the pullbacks
$\theta_i^*\rho_i\in\ufrxt(T_i)$ agree on all overlaps. Since $\ufrxt$ is represented by a superscheme, it is an \'etale sheaf. Hence these sections glue to a unique global frame
$\rho\in\ufrxt(T)$.

Isomorphisms also descend since superconformal isomorphisms
satisfy \'etale descent, and the condition
$\phipull\rho'=\rho$ can be checked after an \'etale
cover. 
\end{proof}

Let $\mfr$ be the moduli superstack of genus $g$ super
Riemann surfaces. It is smooth, Deligne--Mumford, and of
finite type over $\bb$. Forgetting the frame gives a morphism
\[
F:P\longrightarrow\mfr.
\]

\begin{prop}\label{prop: P is DM}
The morphism $F$ is smooth, of finite type, and representable
by superschemes. In particular, $P$ is a smooth
Deligne--Mumford superstack of finite type over $\bb$.
\end{prop}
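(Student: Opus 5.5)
The key step is to identify the fiber product of $F$ with an arbitrary test family with the framing sheaf. Fix a superscheme $T$ and a morphism $T\to\mfr$, i.e.\ a family $(\pi:X\to T,D)$ of genus $g$ super Riemann surfaces. I claim that the $2$-fiber product $P\times_{\mfr}T$ is equivalent, as a category fibered in groupoids over $\sS/T$, to the sheaf $\ufrxt$.

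To prove the claim, describe an object of the fiber product over $S\to T$. It is a framed family $(X'/S,D',\rho')$ together with a superconformal isomorphism $\phi:X_S\lgr X'$ over $S$. Send it to $\phipull\rho'\in\Fr(X_S/S)$. Conversely, a frame $\rho$ on $X_S/S$ gives the object $(X_S,D_S,\rho,\id)$.

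Morphisms in the fiber product are superconformal isomorphisms $\psi:X'\lgr X''$ compatible with the identifications, that is, $\psi\circ\phi'=\phi''$ and $\psi^*\rho''=\rho'$. There is at most one such $\psi$, namely $\phi''\circ\phi'^{-1}$, and it exists precisely when $\phi'^*\rho'=\phi''^*\rho''$. Here one uses functoriality $(\psi\circ\phi)^*=\phi^*\circ\psi^*$ of the frame pullback, which follows from $c_{\psi\circ\phi}=c_\phi\circ c_\psi$. Hence the fiber product is fibered in setoids equivalent to $\ufrxt$, and the two functors above are quasi-inverse.

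One also has to check compatibility with base change: the construction $\phi^*$ commutes with pullback along $S'\to S$ because $\mce$ commutes with base change (Lemma~\ref{lemma: properties of six-canonical bundle}). This is the step needing the most care, but it is routine.

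By Lemma~\ref{lem:frame-torsor}, $\ufrxt$ is represented by a superscheme $Y\to T$ that is Zariski-locally isomorphic to $G\times T$. Since $G=\on{PGL}(V)$ is smooth of finite type over $\bb$, the morphism $Y\to T$ is smooth and of finite type. Because $T\to\mfr$ was arbitrary, $F$ is representable by superschemes, smooth, and of finite type.

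For the final assertion, take an étale atlas $U\to\mfr$ with $U$ a smooth superscheme of finite type; this exists because $\mfr$ is a smooth Deligne--Mumford superstack of finite type. The base change $P\times_{\mfr}U$ is then a superscheme smooth of finite type over $U$, hence smooth of finite type over $\bb$, and it maps étale and surjectively onto $P$.

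It remains to see that the diagonal of $P$ is representable, quasi-compact and separated. Since $F$ is representable, the relative diagonal $P\to P\times_{\mfr}P$ is a monomorphism that is representable; it is in fact a closed immersion because $Y\to T$ is separated, $G$ being affine. The diagonal of $P$ factors as this relative diagonal followed by the base change of $\Delta_{\mfr}$, so it inherits these properties. Together with $P$ being an étale superstack (previous lemma), this shows $P$ is a smooth Deligne--Mumford superstack of finite type over $\bb$.

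The one point I expect to require verification beyond bookkeeping is the identification of $\on{Isom}$ in the fiber product, i.e.\ that automorphisms of $X$ acting on frames do not create extra automorphisms. This holds because the fiber product rigidifies via $\phi$, so no freeness of the $\on{Aut}(X)$-action is needed.
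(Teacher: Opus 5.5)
Your proposal is correct and follows essentially the same route as the paper: identify $P\times_{\mfr}T$ with the framing sheaf $\ufrxt$ via $(X'/S,\rho',\phi)\mapsto\phi^*\rho'$, prove essential surjectivity and full faithfulness using the forced formula $\psi=\phi''\circ\phi'^{-1}$, invoke Lemma~\ref{lem:frame-torsor}, and pull back an \'etale atlas of $\mfr$. Your proposal also makes explicit several points the paper leaves implicit: the functoriality $c_{\psi\circ\phi}=c_\phi\circ c_\psi$ of frame pullback, and the properties of the diagonal of $P$ (the relative diagonal is a closed immersion because $G$ is affine, composed with a base change of $\Delta_{\mfr}$).
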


\begin{proof}
We must check that for every morphism $T\to\mfr$ from a
superscheme $T$, the base change $P_T:=P\times_{\mfr}T$
is represented by a superscheme smooth and of finite type
over $T$. We will show that $P_T$ is naturally equivalent
to the framing sheaf $\ufrxt$ of the family $X/T$
classified by $T\to\mfr$, viewed as a stack with only
identity arrows.

For any $S\to T$, $P_T(S)$ is the groupoid whose objects
are triples $(Y/S,\rho,\alpha)$ consisting of a framed
super Riemann surface $(Y/S,\rho)$ and a superconformal
isomorphism $\alpha:X_S\lgr Y$ over $S$.
Define
\[
\Phi_S:P_T(S)\longrightarrow\ufrxt(S),
\qquad
(Y/S,\rho,\alpha)\longmapsto\alpha^*\rho.
\]

\medskip
\noindent\emph{Essentially surjective.}
For any $\rho'\in\ufrxt(S)$, the object
$(X_S/S,\rho',\id_{X_S})$ maps to $\rho'$.

\medskip
\noindent\emph{Fully faithful.}
A morphism from $(Y/S,\rho,\alpha)$ to
$(Y'/S,\rho',\alpha')$ is an isomorphism
$\phi:(Y/S,\rho)\lgr(Y'/S,\rho')$ of framed super
Riemann surfaces satisfying
\begin{equation}\label{expression for phi}
\phi\circ\alpha=\alpha'
\iff
\phi=\alpha'\circ\alpha\inv.
\end{equation}
Thus there is at most one such morphism.
The superconformal isomorphism
$\phi=\alpha'\circ\alpha\inv$ preserves the frames
precisely when $\phipull\rho'=\rho$.
Since $\alpha^*$ is a bijection on frames, we have
\[
\phipull\rho'=\rho
\iff
\alpha^*(\phipull\rho')=\alpha^*\rho
\iff
\alpha'^*\rho'=\alpha^*\rho.
\]
Consequently, the morphism set between these two objects
is a singleton exactly when their images under $\Phi_S$
are equal, and is empty otherwise. This proves full
faithfulness.

By Lemma~\ref{lem:frame-torsor}, $\ufrxt$ is represented
by a superscheme smooth and of finite type over $T$.
This proves the assertions about $F$.

Finally, an \'etale atlas $U\to\mfr$ pulls back to an
\'etale atlas $P\times_{\mfr}U\to P$ by a superscheme.
Hence $P$ is Deligne--Mumford. Its smoothness and finite
type over $\bb$ follow by composing $F$ with the smooth
finite-type structure morphism of $\mfr$.
\end{proof}

As a corollary, we obtain the following global quotient description of supermoduli space: 
\begin{corollary}\label{global quotient}
There is a canonical equivalence
\[
\mfr\cong[P/G].
\]
\end{corollary}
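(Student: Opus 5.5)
The plan is to show that $F:P\to\mfr$ is a $G$-torsor over $\mfr$, i.e.\ a principal $G$-bundle in the stack sense, and then invoke the general principle that if $P\to\mathcal{M}$ is a $G$-torsor then the induced morphism $[P/G]\to\mathcal{M}$ is an equivalence. First I define the right action of $G$ on $P$: for a framed family $(X/T,D,\rho)$ and $g\in G(T)$, set $(X/T,D,\rho)\cdot g:=(X/T,D,\rho\cdot g)$ using the action \eqref{s point of frame action}. A superconformal isomorphism $\phi$ with $\phi^*\rho'=\rho$ also satisfies $\phi^*(\rho'\cdot g)=\rho\cdot g$, because $\phi^*$ is post-composition with $f^*c_\phi$ while $g$ acts by pre-composition. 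Hence this is a strict action on the fibered category, compatible with pullback, and $F$ is $G$-invariant because it forgets the frame.

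Next I construct the comparison functor $\Psi:[P/G]\to\mfr$. An object of $[P/G]$ over $T$ is a $G$-torsor $Q\to T$ with a $G$-equivariant map $Q\to P$. Over an étale cover $T_i\to T$ trivializing $Q$, this gives framed families $(X_i,D_i,\rho_i)$ together with gluing isomorphisms that respect the frames up to the transition elements $g_{ij}$. Forgetting the frames, these are superconformal gluing data satisfying the cocycle condition, and they descend by étale descent for super Riemann surfaces to a family $X/T$. Equivalently, $Q\to P\to\mfr$ is $G$-invariant and so descends to $T\to\mfr$. In the other direction, a family $X/T$ gives the torsor $\ufrxt$ of Lemma~\ref{lem:frame-torsor}, together with the tautological equivariant map $\ufrxt\to P$ sending $\rho\mapsto(X_S,D_S,\rho)$. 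This defines $\Xi:\mfr\to[P/G]$.

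The key input is the proof of Proposition~\ref{prop: P is DM}, which shows $P\times_{\mfr}T\simeq\ufrxt$ for any $T\to\mfr$. By Lemma~\ref{lem:frame-torsor}, this is a $G_T$-torsor, and one checks that the identification is $G$-equivariant, since $\Phi_S(Y,\rho\cdot g,\alpha)=\alpha^*\rho\cdot g$. So $F$ is a $G$-torsor. With this in hand, $\Psi\circ\Xi\cong\id$ is immediate: the family descended from $\ufrxt$ is $X$ itself, via the identity-framed objects.

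For $\Xi\circ\Psi\cong\id$, start from $(Q\to T,\ e:Q\to P)$ with descended family $X$. The map $e$ yields, for each $q\in Q(S)$, a framed family $e(q)$ together with a canonical isomorphism $\alpha_q:X_S\cong$ underlying family of $e(q)$. Then $q\mapsto\alpha_q^*(\text{frame of }e(q))$ defines a $G$-equivariant map $Q\to\ufrxt$. Any equivariant map of $G_T$-torsors is an isomorphism, and it is compatible with the maps to $P$.

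I expect the main obstacle to be bookkeeping rather than geometry. One has to check that these constructions are natural in isomorphisms and pullbacks, and in particular that the identification $\alpha_q$ is canonical and cocycle-compatible, so that the map $Q\to\ufrxt$ is well defined. A cleaner route to the same conclusion is to cite the standard lemma that a morphism $P\to\mathcal{M}$ of stacks which is a $G$-torsor, as established above via Proposition~\ref{prop: P is DM} and Lemma~\ref{lem:frame-torsor}, induces an equivalence $[P/G]\cong\mathcal{M}$. I would verify that lemma only at the level of checking that both functors are fully faithful and essentially surjective étale-locally, which reduces to the trivial-torsor case $Q=G_T$.
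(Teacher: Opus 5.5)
Your proposal is correct and follows the paper's route. The paper states this corollary without a separate proof, deducing it from Proposition~\ref{prop: P is DM}: there $P\times_{\mfr}T$ is identified with the framing sheaf $\ufrxt$, which is a $G_T$-torsor by Lemma~\ref{lem:frame-torsor}, so $F$ is a $G$-torsor and $\mfr\cong[P/G]$ follows by the standard lemma. Your checks that the identification is $G$-equivariant and that frame pullback commutes with the action supply details the paper leaves implicit.
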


Next we prove that $P$ is an algebraic superspace by showing that its objects admit no nontrivial automorphisms. To do so, we use the closed immersions induced by frames and introduce notation used throughout the rest of the paper.

Let $(\pi: X \to T, D, \rho)$ be an object of $P(T)$, and choose local representatives $(T_i,\rho_i)$ of $\rho$. Each $\rho_i$ determines a surjection
\begin{equation}\label{q rho}
q_{\rho_i}: V_{X_{T_i}} \twoheadrightarrow \mcl|_{X_{T_i}}
\end{equation}
and hence a map $i_{\rho_i}:X_{T_i}\to\pp(V)$. On overlaps, the representatives differ by invertible even scalars, so the maps $i_{\rho_i}$ agree and define a map $\irho:X\to\pp(V)$. Write
\begin{equation}\label{j rho the closed immersion}
j_{\rho}:=(\irho,\pi):X\hra\pp(V)\times T
\end{equation}
for the resulting closed immersion over $T$, and set
\[
\mcz_{\rho}:=j_{\rho}(X)\subset\pp(V)\times T.
\]
Then $j_{\rho}$ factors as
\begin{equation}\label{psi rho}
X\overset{\psi_{\rho}}{\cong}\mcz_{\rho}
\subset\pp(V)\times T,
\end{equation}
where the second map is the inclusion.

\begin{lemma}\label{P is alg}
The superstack $P$ is a smooth algebraic superspace of finite type over $\bb$.
\end{lemma}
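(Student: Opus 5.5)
By Proposition~\ref{prop: P is DM}, $P$ is already a smooth Deligne--Mumford superstack of finite type over $\bb$. A Deligne--Mumford (super)stack whose objects have trivial automorphism groups is equivalent to an algebraic (super)space: its inertia $I_P\to P$ is an isomorphism, so the diagonal is a monomorphism, and the \'etale atlas $P\times_{\mfr}U$ presents $P$ as the quotient of an \'etale equivalence relation. Smoothness and finite type are properties of the atlas and carry over unchanged. The whole proof therefore reduces to the following claim. For every superscheme $T$ and every object $(\pi:X\to T,D,\rho)$ of $P(T)$, the only superconformal automorphism $\phi:X\lgr X$ over $T$ with $\phipull\rho=\rho$ is $\phi=\id_X$.

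To prove the claim I will use the closed immersion $j_\rho=(\irho,\pi):X\hra\pp(V)\times T$ from \eqref{j rho the closed immersion}. The first step is to compare $\irho\circ\phi$ with $\irho$. The map $\irho\circ\phi$ is determined by the pulled-back quotient $\phipull q_\rho$. Composing with $\gamma_\phi:\phipull\mcl\lgr\mcl$ turns this quotient into $q_{\phipull\rho}$, because $c_\phi$ is built from pulling sections back along $\phi$ and then applying $\pi_*\gamma_\phi$. I will check this locally on a representative $\rho_i$. A section $v\in V$ gives the section $\rho_i(v)$ of $\mcl$. Under $\phi$ it becomes $\gamma_\phi(\phipull\rho_i(v))=c_\phi(\rho_i(v))$, which is exactly the image of $v$ under the representative of $\phipull\rho$. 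Since quotients related by the isomorphism $\gamma_\phi$ define the same map to $\pp(V)$, this gives $\irho\circ\phi=i_{\phipull\rho}$.

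The second step uses the hypothesis $\phipull\rho=\rho$. The local representatives of $\phipull\rho$ and $\rho$ differ only by even invertible scalars, which do not change the induced map to $\pp(V)$. Hence $i_{\phipull\rho}=\irho$, and so $\irho\circ\phi=\irho$. Since $\phi$ lies over $T$ we also have $\pi\circ\phi=\pi$, and together these give $j_\rho\circ\phi=j_\rho$. A closed immersion is a monomorphism of superschemes, so $\phi=\id_X$.

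The main obstacle is the identification $\irho\circ\phi=i_{\phipull\rho}$, that is, the compatibility between the canonical isomorphism $\gamma_\phi$ and the definition of $c_\phi$. I expect this to be a routine local computation, but the parity and scalar conventions in the quotient description of $\pp(V)$ need care. A secondary point is to cite, or briefly justify, the standard fact that a Deligne--Mumford superstack with trivial inertia is an algebraic superspace. Its classical proof goes through unchanged in the super setting, since it uses only the diagonal and the \'etale atlas.
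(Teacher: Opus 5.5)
Your proposal is correct and is essentially the paper's own proof. You reduce via Proposition~\ref{prop: P is DM} to showing that a superconformal automorphism $\phi$ with $\phipull\rho=\rho$ is trivial, use $q_{\phipull\rho_i}=\gamma_\phi\circ\phipull q_{\rho_i}$ to get $j_\rho\circ\phi=j_{\phipull\rho}=j_\rho$, and conclude because the closed immersion $j_\rho$ is a monomorphism; your local check via $c_\phi$ and your remark on trivial inertia just spell out steps the paper leaves implicit.
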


\begin{proof}
By Proposition~\ref{prop: P is DM}, it suffices to show that every automorphism of a framed family is the identity.

Let $\phi:X\cong X$ be a superconformal automorphism satisfying $\phipull\rho=\rho$. For each local representative $\rho_i$, we have
\[
q_{\phi^*\rho_i}
=\gamma_{\phi}\circ\phipull q_{\rho_i}.
\]
Consequently, $j_{\phipull\rho}=j_{\rho}\circ\phi$. Since $\phipull\rho=\rho$, this gives $j_{\rho}=j_{\rho}\circ\phi$. The closed immersion $j_{\rho}$ is a monomorphism, so $\phi=\id_X$.
\end{proof}

Next we show that $P$ is a smooth superscheme of finite type over $\bb$. Its analytification is therefore a complex supermanifold, which we will use in the section on super Teichm\"uller space.
\medskip

We recall the Hilbert superscheme from \cite{bruzzo2023notes}. Let $H$ be the contravariant functor from $\sS$ to $\on{Set}$ sending $T$ to the set of closed subsuperschemes $Z\subset\pp(V)\times T$ such that $Z\to T$ is a family of smooth genus $g$ supercurves and $\mc{O}_Z(1)$ has super Hilbert polynomial
\[
6n(g-1)+1-g\;|\;6n(g-1).
\]
The Hilbert superscheme exists by \cite[Theorem~4.3]{bruzzo2023notes}; $H$ is its open locus parametrizing these smooth supercurves. Write $\mcz\to H$ for the universal family. We do not impose a six-canonical condition on $H$.

For a morphism $f:T\to H$, we denote the pullback of $\mcz$ by $\mcz_T$, or by $\mcz_f$ when we want to emphasize the map.
\medskip

Next we include the data of a superconformal structure. Let $Q$ be the contravariant functor on $\sS/H$ sending $f:T\to H$ to the set of isomorphism classes of locally free rank $1|0$ quotients
\[
q:\mct_{\mcz_T/T}\twoheadrightarrow M.
\]
Two quotients $q$ and $q'$ are isomorphic if there is an isomorphism $\phi:M\cong M'$ satisfying $\phi\circ q=q'$. The functor $Q$ is represented by a superscheme over $H$. 

For such a quotient, set $D:=\ker(q)$, a locally free rank $0|1$ subbundle of $\mct_{\mcz_T/T}$. Define $Q'\subset Q$ by requiring that the map induced by the bracket,
\[
D\otimes D\xra{\bracket}\mct_{\mcz_T/T}/D,
\]
is an isomorphism. In the universal curve over $Q$, the locus where this map is not an isomorphism is closed. Since the universal curve is proper over $Q$, its image in $Q$ is closed. The complement of this image represents $Q'$, so $Q'$ is an open subsuperscheme of $Q$ \cite{ADT}. 

Since $M\cong\mct_{\mcz_T/T}/D$, we may equivalently describe a point of $Q'$ by the pair $(\mcz_T,D)$.
\medskip

There is a natural morphism
\[
I:P\to Q'
\]
sending $(\pi:X\to T,D,\rho)$ to $(\mcz_{\rho},d\psi_{\rho}(D))$. Here $d\psi_{\rho}(D)$ denotes the image of $\psi_{\rho*}D$ under
\[
d\psi_{\rho}:
\psi_{\rho*}\mct_{X/T}\lgr\mct_{\mcz_{\rho}/T}.
\]

We use the bosonic-restriction convention of Section~\ref{sec:conventions}.

\begin{lemma}\label{lem:bosonic-monomorphism}
The morphism
\[
I_{\bos}:P_{\bos}\longrightarrow Q'_{\bos}
\]
is a monomorphism of algebraic spaces.
\end{lemma}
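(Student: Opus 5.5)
Since $P$ is an algebraic space by Lemma~\ref{P is alg}, and $Q'$ is a superscheme, it suffices to show that for every ordinary scheme $T$ the map $I_{\bos}(T):P(T)\to Q'(T)$ is injective on isomorphism classes. In other words, two framed families over a bosonic base with the same image are isomorphic as framed families. We may then pass to the diagonal: a morphism is a monomorphism iff its diagonal is an isomorphism, and this reduces to the set-theoretic injectivity just described.

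\textbf{Step 1: building the isomorphism.} Let $(X/T,D,\rho)$ and $(X'/T,D',\rho')$ satisfy $(\mcz_\rho,d\psi_\rho(D))=(\mcz_{\rho'},d\psi_{\rho'}(D'))$. Set $\phi:=\psi_{\rho'}^{-1}\circ\psi_\rho:X\lgr X'$. This is an isomorphism over $T$ with $j_{\rho'}\circ\phi=j_\rho$. Because $d\psi_\rho(D)=d\psi_{\rho'}(D')$ inside $\mct_{\mcz/T}$, we get $d\phi(D)=\phi^*D'$. So $\phi$ is superconformal.

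\textbf{Step 2: compatibility of frames.} It remains to show $\phi^*\rho'=\rho$. As in the proof of Lemma~\ref{P is alg}, $j_{\phi^*\rho'}=j_{\rho'}\circ\phi=j_\rho$. Thus the two frames $\rho$ and $\phi^*\rho'$ on the same family $X/T$ induce the same closed immersion into $\pp(V)\times T$. Locally on $T$, pick representatives $\rho_i,\rho_i''$. Equality of the induced maps to $\pp(V)$ means the surjections $q_{\rho_i},q_{\rho_i''}:V_{X}\twoheadrightarrow\mcl$ differ by an automorphism of $\mcl$ after pulling back $\ox(1)$. Concretely, there is an isomorphism $u:\mcl\lgr\mcl$, i.e.\ a unit $u\in\ox^*$, with $q_{\rho''_i}=u\,q_{\rho_i}$.

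\textbf{Step 3: the unit is constant along fibers (main obstacle).} We must show that $u$ is pulled back from $T$, so that $\rho''_i=\lambda_i\rho_i$ with $\lambda_i\in\mc{O}_{T}^*$. This is where the bosonic hypothesis enters. Over a bosonic $T$, we have $\ox=\mc{O}_C\oplus\Pi L$, so an even unit is a section of $\mc{O}_C^*$. Since $C\to T$ is proper with connected fibers, $\pi_*\mc{O}_C=\mc{O}_T$, and hence $u\in\mc{O}_T^*$. Over a non-bosonic base, $u$ could have a nilpotent component of the form $\theta\cdot(\text{odd})$ that is not pulled back from $T$; such a component would come from $H^0(\Pi L)$, or more precisely from the odd part of $\pi_*\ox$. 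This is why the statement is restricted to the bosonic part. Then $\pi_*$ of $q_{\rho''_i}=u\,q_{\rho_i}$ gives $\rho''_i=u\,\rho_i$, so $\phi^*\rho'=\rho$ as frames. Hence $\phi$ is an isomorphism in $P(T)$, which proves injectivity and therefore that $I_{\bos}$ is a monomorphism.
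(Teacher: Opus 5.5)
Your proposal is correct and follows essentially the same route as the paper. You build $\phi=\psi_{\rho'}^{-1}\circ\psi_\rho$, check it is superconformal, and reduce frame compatibility to showing that the even unit relating the two evaluation maps is pulled back from $T$, which holds because $(\mc{O}_X)_{\ov{0}}=\mc{O}_C$ over a bosonic base and $\pi_*\mc{O}_C=\mc{O}_T$. The only difference is cosmetic: you first replace $\rho'$ by $\phi^*\rho'$ and compare two frames on the same family, whereas the paper compares the isomorphism $\lambda$ coming from the equal embeddings with the canonical isomorphism $\gamma_\phi$.
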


\begin{proof}
Let $T$ be an ordinary scheme, and suppose
\[
(\mcz_{\rho},d\psi_{\rho}(D))
=(\mcz_{\rho'},d\psi_{\rho'}(D'))
\]
in $Q'(T)$. The equality $\mcz_{\rho}=\mcz_{\rho'}$ defines an isomorphism
\[
\phi:=\psi_{\rho'}\inv\circ\psi_{\rho}:X\lgr X'.
\]
The equality of the transported distributions makes $\phi$ superconformal, and the factorizations of $j_{\rho}$ and $j_{\rho'}$ give
\[
j_{\rho}=j_{\rho'}\circ\phi.
\]

We must show that $\phipull\rho'=\rho$. This can be checked Zariski locally on $T$, so choose local linear representatives of both frames, and denote their evaluation maps by $q_{\rho}$ and $q_{\rho'}$.

To compare the frames, we express the equality of embeddings in terms of these evaluation maps. By the quotient description of $\pp(V)$, the equality $j_{\rho}=j_{\rho'}\circ\phi$ gives a unique isomorphism $\lambda:\phipull\mcl'\lgr\mcl$ satisfying
\[
\lambda\circ\phipull q_{\rho'}=q_{\rho}.
\]
The evaluation map corresponding to the pulled-back frame is $\gamma_{\phi}\circ\phipull q_{\rho'}$, where $\gamma_{\phi}:\phipull\mcl'\lgr\mcl$ is the canonical isomorphism. Thus $\phipull\rho'=\rho$ if and only if
\[
\lambda=\pipull(a)\gamma_{\phi}
\]
for some $a\in\Gamma_T^*:=H^0(T,\mc{O}_T)^{\times}$.

The automorphism $\gamma_{\phi}\circ\lambda\inv$ of $\mcl$ is multiplication by an element
\[
s\in\Gamma_X^*:=H^0(X,\mc{O}_X)_{\ov{0}}^{\times}.
\]
The framing condition is therefore equivalent to $s$ lying in the image of $\pipull:\Gamma_T^*\to\Gamma_X^*$.

Let $C=X_{\bos}$, and let $J$ be the odd ideal of $\ox$. Since $T$ is ordinary and $X/T$ has relative dimension $1|1$, we have $J^2=0$ and the canonical decomposition
\[
\ox=\oc\oplus J,
\qquad
(\ox)_{\ov{0}}=\oc.
\]
Moreover, $C\to T$ is a smooth proper family of geometrically connected curves, so $(\pi|_C)_*\oc=\mc{O}_T$. It follows that pullback gives an isomorphism
\[
\pipull:
H^0(T,\mc{O}_T)^{\times}
\lgr H^0(X,\mc{O}_X)_{\ov{0}}^{\times}.
\]
Thus $s=\pipull(a)$ for some $a\in\Gamma_T^*$, and $\lambda=\pipull(a^{-1})\gamma_{\phi}$. Hence $\phi$ preserves the frames locally, and therefore globally.

Consequently, equality of the images in $Q'(T)$ implies that the two objects are isomorphic in $P(T)$. Since this holds for every ordinary scheme $T$, $I_{\bos}$ is a monomorphism.
\end{proof}

\begin{prop}\label{prop:P-superscheme}
The morphism $I:P\to Q'$ is locally quasi-finite and separated. Moreover, $P$ is a smooth superscheme of finite type over $\bb$.
\end{prop}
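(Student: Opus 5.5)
We will prove the three assertions in turn, reducing as much as possible to bosonic statements where Lemma~\ref{lem:bosonic-monomorphism} applies, and then invoke the standard fact that an algebraic space which is locally quasi-finite and separated over a superscheme is itself a superscheme.

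\emph{Local quasi-finiteness.} This is a condition on fibers over points $\on{Spec} k\to Q'$. Such fibers only see the underlying topological spaces, and $|P|=|P_{\bos}|$ and $|Q'|=|Q'_{\bos}|$. So we reduce to the bosonic morphism $I_{\bos}$ (base change of $I$ to bosonic test schemes). By Lemma~\ref{lem:bosonic-monomorphism}, $I_{\bos}$ is a monomorphism, so every geometric fiber has at most one point. By Lemma~\ref{P is alg}, $P$ is of finite type, and $Q'$ is locally of finite type, so $I$ is locally of finite type. Together these give local quasi-finiteness. More precisely, the fiber of $I$ over a point $t$ is an algebraic superspace locally of finite type over $k(t)$ whose bosonic reduction is the fiber of $I_{\bos}$, hence has at most one point.

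\emph{Separatedness.} We must show that the diagonal $P\to P\times_{Q'}P$ is a closed immersion. It is already a monomorphism and locally of finite type, since $P$ is an algebraic space. We will use the valuative criterion, which only involves maps from spectra of (ordinary) DVRs. Let $(X,\rho)$ and $(X',\rho')$ over a DVR $R$ have the same image in $Q'(R)$. By Lemma~\ref{lem:bosonic-monomorphism} applied with $T=\on{Spec}R$ (an ordinary scheme), they are isomorphic over all of $\on{Spec}R$, not merely over the generic point. So the diagonal is universally closed. A universally closed monomorphism locally of finite type is a closed immersion; in the super setting this is checked on the bosonic reduction, plus the fact that a monomorphism is unramified. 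Alternatively, we can argue directly: the locus in $P\times_{Q'}P$ where the isomorphism $\phi=\psi_{\rho'}^{-1}\circ\psi_\rho$ carries $\rho'$ to $\rho$ is the locus where a section of $\mc{O}_X^*$ lies in $\pipull\mc{O}_T^*$. This is a closed condition, cut out by vanishing of the components in $\pist J$ and of the non-constant parts, using properness of $X/T$.

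\emph{$P$ is a superscheme.} By Proposition~\ref{prop: P is DM} and Lemma~\ref{P is alg}, $P$ is a smooth algebraic superspace of finite type. Now $I:P\to Q'$ is representable, separated, and locally quasi-finite, and $Q'$ is a superscheme. By the super analogue of Knutson's theorem (Zariski's main theorem for algebraic spaces), a separated, locally quasi-finite algebraic space over a superscheme is a superscheme. The bosonic part follows from the classical theorem applied to $I_{\bos}$. The nilpotent odd thickening does not change the étale or Zariski topology, so a Zariski cover of $P_{\bos}$ by schemes lifts to a cover of $P$ by open subspaces whose bosonic reductions are affine schemes. By the super version of Chevalley's criterion, such subspaces are affine superschemes. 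Smoothness and finite type are inherited from Lemma~\ref{P is alg}.

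\emph{Main obstacle.} The key step is the last one: transferring ``algebraic space with schematic reduction implies superscheme'' to the super setting. I would argue it through the nilpotent filtration by powers of $J$. At each stage we have a square-zero extension of algebraic spaces whose reduction is a scheme, and a square-zero thickening of an affine scheme inside an algebraic space is affine, with the obstruction lying in $H^1$ of a quasi-coherent sheaf on an affine. Since $J$ is nilpotent, finitely many steps suffice.
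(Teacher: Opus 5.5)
Your proposal is correct and follows the paper's route. Local quasi-finiteness and separatedness of $I$ come from local finite type together with the fact that $I_{\bos}$ is a monomorphism (Lemma~\ref{lem:bosonic-monomorphism}), and a super version of Knutson's theorem then upgrades $P$ to a superscheme, with smoothness and finite type inherited from Lemma~\ref{P is alg}. The paper simply cites \cite[Proposition~A.1]{POP} for that last step, whereas you sketch a proof of it by applying the classical theorem to $I_{\bos}$ and then showing, via the filtration by powers of $J$, that an algebraic superspace with affine bosonic reduction is affine.
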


\begin{proof}
Since $P$ is of finite type over $\bb$, the morphism $I$ is locally of finite type. Its bosonic restriction $I_{\bos}$ is a monomorphism, hence is separated and locally quasi-finite. These two properties are detected on the bosonic reduction, so $I$ is separated and locally quasi-finite.

By \cite[Proposition~A.1]{POP}, a separated locally quasi-finite morphism from an algebraic superspace to a superscheme is representable by superschemes. Applying this to $I:P\to Q'$ shows that $P$ is a superscheme. Its smoothness and finite-type property over $\bb$ were established in Lemma~\ref{P is alg}.
\end{proof}

\section{Super Teichm\"uller space} \label{section: super Teich space}

We now work with complex-analytic superspaces. We will continue to write $P$ and $G:= \on{PGL}(5g-5|6g-6,\bb)$ for their analytifications. Since both are smooth and of finite type over $\bb$, both are complex supermanifolds after analytification.  

\subsection{Kuranishi families from moduli of framed super Riemann surfaces}

In this section, we construct Kuranishi families for super Riemann surfaces using $P$ as an ambient space. 
\medskip

We start by defining the $G$-action on $P$.  Let 
\begin{equation} \label{universal framed family} (\mcz \to P, \rho)\end{equation}
be the universal framed family. For every map $t:T\to P$, write $(\mcz_T \to T, \ t^*\rho)$ for the pullback of \eqref{universal framed family}. For any $g:T\to G$, set $t_g:T\to P$ to be the classifying map of the framed family
\[
(\mcz_T \to T, \ (t^*\rho) \circ g),
\]
and define
\[
a_T:G(T)\times P(T)\to P(T),
\qquad
a_T(g,t):=t_g.
\]
These maps commute with pullback and therefore define a morphism $a:G\times P\to P$.

Next we identify the stabilizer of a point with the superconformal automorphism group of the corresponding super Riemann surface.

Let $p\in P$ correspond to $(X,\rho)$. An element $g\in G$ fixes $p$ if and only if there is a superconformal automorphism $\gamma:X\lgr X$ satisfying $\gamma^*(\rho\circ g)=\rho$. By definition of the pullback of a frame, this means
\[
c_\gamma\circ\rho\circ g=\rho,
\qquad
g=\rho^{-1}\circ c_\gamma^{-1}\circ\rho,
\]
where equality is up to scalar multiplication. 
Writing $\Aut_{\mfr}(X)$ for the superconformal automorphism group of $X$, the above formulas give a bijection: 
\[
\Aut_{\mfr}(X)\to\on{Stab}_G(p),
\qquad
\gamma\mapsto\rho^{-1}\circ c_\gamma^{-1}\circ\rho.  
\]
In particular, $\on{Stab}_G(p)$ is finite and bosonic because $\on{Aut}_{\mfr}(X)$ is
\cite[Proposition~4.3]{CV}: indeed, the tangent space to the group of superconformal automorphisms of $X$ is the automorphism group of first-order deformations of $X$, the elements of which are usually referred to as infinitesimal superconformal automorphisms. The automorphisms of first-order deformations are in bijection with $H^0(X,\mc{A}_X)$, where $\mc{A}_X$ is the sheaf of superconformal vector fields. The identification $\mc{A}_X\cong\mc{D}^2$, as sheaves of complex vector spaces, together with the decomposition $\mc{D}^2= T_C \oplus \Pi (T_C \otimes L)= T_C \oplus \Pi L \inv$, gives $H^0(X, \mc{A}_X)=0$. 
\medskip

We now define deformations and Kuranishi families.
A \emph{deformation} of $X$ over a pointed analytic superspace $(S,s_0)$ is a family $\eta:Y\to S$ together with a superconformal isomorphism $\psi:X\lgr Y_{s_0}$. A morphism of deformations of $X$ from $(\eta:Y\to(S,s_0),\psi)$ to $(\eta':Y'\to(S',s_0'),\psi')$ is a cartesian pair $(f,F)$ 
\[
\begin{tikzcd}
Y \arrow[r,"F"] \arrow[d,"\eta"'] &
Y' \arrow[d,"\eta'"] \\
(S,s_0) \arrow[r,"f"] &
(S',s_0')
\end{tikzcd}
\]
such that $f(s_0)=s_0'$, the induced isomorphism
\[ \tilde{F}: Y \lgr Y' \times_{S',f} S\]
preserves the superconformal structure and $\tilde{F}_{s_0}: Y_{s_0} \lgr Y_{s_0'}'$ satisfies $\tilde{F}_{s_0} \circ \psi=\psi'$.

A \emph{Kuranishi family} for $X$ is a deformation
\[
\pi:\mcx\to(B,b_0),
\qquad
\phi:X\lgr\mcx_{b_0},
\]
where $B$ is a complex supermanifold of dimension $3g-3|2g-2$, with the following property: If $(\eta: Y \to (S,s_0), \psi)$ is a deformation of $X$ then there exists a morphism of $X$-deformations $(f,F)$ from $(\eta, \psi)$ to $(\pi, \phi)$ which is essentially unique in the sense that if $(g,G)$ is another such morphism then there exists an open neighborhood $W \ni s_0$ of $S$ such that $(i_W, I_W) \circ (f,F)=(i_W, I_W) \circ (g,G)$ where $(i_W: W \subset S, I_W)$ is the induced cartesian pair 
\[ \begin{tikzcd}
    \eta \inv(W) \arrow[r, "I_W"] \arrow[d, "\eta \vert_W"] & Y \arrow[d, " \eta"] \\
    W \arrow[r, "i_W"] & S.
\end{tikzcd}\]

Fix $p\in P$, and let $(X,\rho)$ be the corresponding framed super Riemann surface. We construct a Kuranishi family for $X$ by choosing a slice through $p$ transverse to its $G$-orbit.

Let $\frak{o}(p)$ be the orbit of $p$. The orbit is identified with the quotient complex supermanifold
$G/\on{Stab}_G(p)$. The canonical map
$
G/\on{Stab}_G(p) \to P$
is an injective immersion, and so the induced map on tangent spaces $T_p\frak{o}(p)\subset T_pP$ is injective. Choose a complementary subspace $N_p$ to the orbit tangent space and a local subsupermanifold $U\subset P$ through $p$ with $T_pU=N_p$.

Let $(\pi:\mcx\to U,\rho_U)$ be the restriction of \eqref{universal framed family} to $U$. The universal frame $\rho$ determines the canonical isomorphism
\[
\phi:=\phi_\rho:X\lgr \mcx \vert_p.
\]
Thus the triple 
\begin{equation}\label{kuranishi}
(\pi:\mcx\to(U,p),\ \rho_U, \ \phi).
\end{equation}
is a deformation of $X$.

\begin{lemma}\label{slice is kuranishi}
The family \eqref{kuranishi} is a Kuranishi family for $X$.
\end{lemma}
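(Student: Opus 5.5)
The plan is to prove existence and essential uniqueness of the classifying morphism by reducing to the $G$-action on $P$. For this I use the fact (Corollary~\ref{global quotient}) that $P\to\mfr$ is a $G$-torsor. First I check that $U$ has the right dimension, $3g-3|2g-2$. The orbit map $G\to P$, $g\mapsto p\cdot g$, has kernel on tangent spaces equal to the Lie superalgebra of $\on{Stab}_G(p)$. That stabilizer is finite and bosonic, so this kernel vanishes, and $\dim T_p\frak{o}(p)=\dim G$. Since $P\to\mfr$ is smooth with fibers $G$-torsors (Proposition~\ref{prop: P is DM}), we get $\dim P=\dim G+(3g-3|2g-2)$. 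Hence $N_p$, and so $U$, has dimension $3g-3|2g-2$.

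Next I show that the map $\sigma:G\times U\to P$, $(g,u)\mapsto u\cdot g$, is a local isomorphism near $(e,p)$. Its differential at $(e,p)$ is the sum of the orbit differential and the inclusion $N_p\hra T_pP$. This sum is an isomorphism onto $T_pP$ by the dimension count and the choice of $N_p$ as a complement. The inverse function theorem for complex supermanifolds then gives neighborhoods $G_0\ni e$ and $U_0\ni p$ (after shrinking $U$) with $\sigma:G_0\times U_0\lgr W$ open in $P$.

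Existence. Let $(\eta:Y\to(S,s_0),\psi)$ be a deformation of $X$. After shrinking $S$, the bundle $\eta_*\omega_{Y/S}^{6}$ is trivial (Lemma~\ref{lemma: properties of six-canonical bundle}), so I can choose a frame $\rho_Y$ of $Y/S$. I normalize it so that at $s_0$ it equals $\psi^{-1*}\rho$; this is possible because frames at $s_0$ form a $G$-torsor (Lemma~\ref{lem:frame-torsor}), and I correct $\rho_Y$ by a constant element of $G$. The framed family $(Y,\rho_Y)$ is classified by a map $t:S\to P$ with $t(s_0)=p$. After shrinking $S$, $t$ lands in $W$, and composing with $\sigma^{-1}$ gives $(g(s),f(s))\in G_0\times U_0$ with $t=f\cdot g$. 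The identification $t^*\mcz\cong Y$ and the action translate into a superconformal isomorphism $F:Y\lgr f^*\mcx$. This $F$ carries $\rho_Y\circ g^{-1}$ to $f^*\rho_U$. At $s_0$ we have $g(s_0)=e$, so $F_{s_0}\circ\psi=\phi$, and $(f,F)$ is a morphism of deformations.

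Uniqueness is the main obstacle. Suppose $(f',F')$ is another morphism. Transporting $f'^*\rho_U$ back along $F'$ gives a second frame on $Y$. By Lemma~\ref{lem:frame-torsor} it differs from $\rho_Y$ by some $g':S\to G$, so $t=f'\cdot g'^{\,-1}$ up to an automorphism of $Y$. The composite $F'\circ F^{-1}$ is an isomorphism $f^*\mcx\cong f'^*\mcx$ over $S$ inducing the identity on $X$ at $s_0$. I first need to show that this isomorphism moves frames by a map $h:S\to G$ with $h(s_0)=e$; for this I use that the automorphism group of $X$ is finite and bosonic with no infinitesimal part ($H^0(X,\mc{A}_X)=0$), together with the condition $F'_{s_0}\circ\psi=\phi=F_{s_0}\circ\psi$. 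Granting that, $f'(s)=f(s)\cdot h(s)$ with both $f,f'$ landing in $U_0$, and $h$ lands in $G_0$ near $s_0$. Injectivity of $\sigma$ on $G_0\times U_0$ then forces $h=e$ and $f=f'$ on a neighborhood of $s_0$. Finally $F=F'$ there, since two isomorphisms with equal frame pullbacks coincide by the monomorphism argument of Lemma~\ref{P is alg}. The delicate part is the first step, controlling $F'\circ F^{-1}$ near $s_0$ in the presence of odd directions in $S$. Triviality of the automorphism group's tangent space, together with the rigidity of $P$ (Lemma~\ref{P is alg}), should handle it.
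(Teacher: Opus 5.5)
Your argument follows the paper's proof: a slice transverse to the orbit, the inverse function theorem for $\sigma$, extension of the central frame, decomposition of the classifying map, and rigidity of framed objects. In the existence part you use rigidity tacitly: $F_{s_0}\circ\psi=\phi$ holds because both are framed isomorphisms $(X,\rho)\lgr(\mcx_p,\rho_U|_p)$ and framed objects have no nontrivial automorphisms. The only incomplete point is the step you leave as ``granted'' in the uniqueness part. It is not delicate, and it needs neither $H^0(X,\mc{A}_X)=0$ nor any special handling of the odd directions of $S$.

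You already have $g'$ such that the frame on $Y$ transported from $f'^*\rho_U$ along $F'$ equals $\rho_Y\circ g'$. Restrict to $s_0$ and pull back along $\psi$. The left side gives $(F'_{s_0}\circ\psi)^*(\rho_U|_p)=\phi^*(\rho_U|_p)=\rho$, and the right side gives $\rho\circ g'(s_0)$. Freeness in Lemma~\ref{lem:frame-torsor} then yields $g'(s_0)=e$. Similarly, your $h$ satisfies $h(s_0)=e$ simply because $F'_{s_0}=F_{s_0}$, since $\psi$ is invertible.

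This pointwise condition is used only to shrink $|S|$ so that $g'^{-1}$ lands in $G_0$ and $f'$ lands in $U_0$. Then $\sigma(g'^{-1},f')=t=\sigma(g,f)$. Since $\sigma|_{G_0\times U_0}$ is an isomorphism onto $W$, hence a monomorphism of superspaces, you get $(g'^{-1},f')=(g,f)$, odd directions included. Now $F$ and $F'$ are both framed isomorphisms $(Y,\rho_Y\circ g^{-1})\lgr(f^*\mcx,f^*\rho_U)$, so they agree by Lemma~\ref{P is alg}. You should also drop ``up to an automorphism of $Y$'': since $P$ is a fine moduli space, $f'=t\cdot g'$ holds on the nose.

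This is exactly how the paper concludes. It notes that the frame transported from $\rho_U$ along $F'$ is itself an extension of the central frame, with classifying map $f'$ and hence decomposition $(e,f')$. It then observes that changing the extending frame by some $h$ with $h(s_0)=e$ changes only the $G$-component. The vanishing of infinitesimal automorphisms enters only through the dimension count and the invertibility of $d\sigma_{(e,p)}$, where you already use it.
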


\begin{proof}

We first note that the base $U$ has the correct dimension: $\dim(U)=3g-3|2g-2$. Indeed, this follows from the equivalence $\mfr\cong[P/G]$, together with the isomorphism $T_p \frak{o}(p) \cong T_e G$ which follows from $T_e \on{Stab}_G(p)=0$.  

We also note that the differential of the restricted action $
a|_{G\times U}:G\times U\to P
$
is invertible at $(e,p)$. Indeed,
\[
da_{e,p}:T_eG\oplus T_pU\to T_pP,
\qquad
(\xi,v)\mapsto d(a(-,p))_e(\xi)+v,
\]
and its two summands map isomorphically onto the complementary subspaces
$T_p\frak{o}(p)$ and $T_pU$.
The inverse function theorem gives open neighborhoods $W\subset G$ of $e$ and $V\subset P$ of $p$ such that
\[
a|_{W\times U}:W\times U\lgr V.
\]

Let $(\eta:Y\to(S,s_0),\psi)$ be a deformation of $X$. On a neighborhood $S'$ of $s_0$, choose a frame $\rho_Y$ satisfying
\[
\psi^*(\rho_Y|_{s_0})=\rho.
\]
The corresponding map $\alpha:S'\to P$ sends $s_0$ to $p$.


After shrinking $S'$, assume that $\alpha(S')\subset V$, and write
\[
(g,f):=(a|_{W\times U})^{-1}\circ\alpha:
S'\to W\times U.
\]
Then $g(s_0)=e$, $f(s_0)=p$, and $\alpha=a(g,f)$. Thus the two framed families classified by $\alpha$ and $a(g,f)$ admit a unique framed isomorphism
\[
\theta:(Y|_{S'},\rho_Y)
\lgr(f^*\mcx,f^*\rho_U\circ g).
\]
At $s_0$, both $\theta_{s_0}\circ\psi$ and $\phi$ are framed isomorphisms from $(X,\rho)$ to $(\mcx_p,\rho_U|_p)$. Since framed objects have no non-trivial automorphisms,
\[
\theta_{s_0}\circ\psi=\phi.
\]
Set $F:=\pr_{\mcx}\circ\theta:Y|_{S'}\to\mcx$. Then $(f,F)$ is the required cartesian pair.

For uniqueness, first replace $\rho_Y$ by another extending frame $\rho_Y'$ with the same restriction to the central fiber. The torsor property gives a unique map $h:S'\to G$ satisfying
\[
\rho_Y'=\rho_Y\circ h,
\qquad
h(s_0)=e.
\]
Its classifying map is
\[
\alpha'=a(h,\alpha)=a(gh,f).
\]
After shrinking $S'$ so that $gh$ takes values in $W$, the inverse of $a|_{W\times U}$ therefore gives the pair $(gh,f)$. In particular, the base map $f$ is unchanged.

The isomorphism constructed from $\rho_Y'$ satisfies
\[
\theta'^*(f^*\rho_U\circ gh)=\rho_Y\circ h.
\]
Equivariance of pullback gives
\[
\theta'^*(f^*\rho_U\circ g)\circ h=\rho_Y\circ h,
\]
and hence $\theta'^*(f^*\rho_U\circ g)=\rho_Y$. Thus $\theta'$ and $\theta$ are isomorphisms between the same framed families. They are equal, so $F$ is unchanged.

Now let $(f',F')$ be any other cartesian pair preserving the central identification. The frame $F'^*\rho_U$ on $Y$ satisfies
\[
\psi^*((F'^*\rho_U)|_{s_0})
=(F'_{s_0}\circ\psi)^*(\rho_U|_p)
=\phi^*(\rho_U|_p)
=\rho.
\]
Its classifying map is $f':S'\to U\subset P$. After shrinking $S'$, this map takes values in $V$, and its product decomposition is $(e,f')$.

Since $(f',F')$ is cartesian, it induces a framed isomorphism
\[
(Y|_{S'},F'^*\rho_U)
\lgr(f'^*\mcx,f'^*\rho_U).
\]
This is the unique framed isomorphism used in the construction, so that construction returns $(f',F')$. The comparison of extending frames above now gives $(f',F')=(f,F)$ after restricting the source.
\end{proof}

The frame $\rho$ determines the point $p\in P$ and the central identification $\phi_\rho:X\lgr\mcx_p$. The construction also uses a choice of transverse slice through $p$.

We now compare this construction with the one obtained from another frame $\rho'=\rho\circ g$ on $X$. There are two cases.
\begin{enumerate}
\item
If $g\in\on{Stab}_G(p)$, the framed super Riemann surfaces $(X,\rho\circ g)$ and $(X,\rho)$ determine the same point $p\in P$. We use the same family over $U$, with the new central identification:
\begin{equation}\label{second kuranishi}
(\pi:\mcx\to(U,p), \ \rho_U, \ 
\phi_{\rho\circ g}:X\lgr\mcx_p).
\end{equation}

Let $\gamma\in\Aut_{\mfr}(X)$ correspond to $g$. Since
$\gamma:(X,\rho)\lgr(X,\rho\circ g)$ is a framed isomorphism, both $\phi_{\rho\circ g}\circ\gamma$ and $\phi_\rho$ are framed isomorphisms to the same universal fiber. Consequently,
\[
\phi_{\rho\circ g}\circ\gamma=\phi_\rho,
\qquad
\phi_{\rho\circ g}=\phi_\rho\circ\gamma^{-1}.
\]

\item
If $g\notin\on{Stab}_G(p)$, the frame $\rho\circ g$ determines a different point $p'\in P$. Choose a slice $(U',p')\subset P$ through $p'$ transverse to $\frak{o}(p')$, and consider
\begin{equation}\label{third kuranishi}
(\pi':\mcx'\to(U',p'),\rho_{U'},
\phi_{\rho\circ g}:X\lgr\mcx'_{p'}).
\end{equation}
\end{enumerate}

Applying Lemma~\ref{slice is kuranishi} to the frame $\rho\circ g$ shows that both \eqref{second kuranishi} and \eqref{third kuranishi} are Kuranishi families for $X$. The universal property of Kuranishi families then gives the following statement: 

\begin{lemma}\label{universality of Kuranshi}
Let $g\in\on{Stab}_G(p)$, and let $\gamma$ correspond to $g$ under
\[
\on{Stab}_G(p)\cong\Aut_{\mfr}(X).
\]
Possibly shrinking $U$ around $p$, there is an isomorphism of deformations $(\fgam,\Fgam)$ from \eqref{kuranishi} to \eqref{second kuranishi}.

\end{lemma}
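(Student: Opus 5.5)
We apply the universal property of Kuranishi families in both directions and then use essential uniqueness to show the two resulting morphisms are mutually inverse near $p$. By Lemma~\ref{slice is kuranishi}, applied once to the frame $\rho$ and once to $\rho\circ g$ (which defines the same point $p$ because $g\in\on{Stab}_G(p)$), both \eqref{kuranishi} and \eqref{second kuranishi} are Kuranishi families for $X$.

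First, we regard \eqref{kuranishi} as a deformation of $X$. The universal property of \eqref{second kuranishi} gives a morphism of deformations $(\fgam,\Fgam)$ from \eqref{kuranishi} to \eqref{second kuranishi}, defined on some neighborhood of $p$. Concretely, $\fgam(p)=p$, and on the central fiber $\Fgam$ satisfies $\Fgam|_p\circ\phi_\rho=\phi_{\rho\circ g}$. By the computation preceding the lemma, this means $\Fgam|_p=\phi_\rho\circ\gamma^{-1}\circ\phi_\rho^{-1}$. Symmetrically, the universal property of \eqref{kuranishi} gives a morphism $(h,H)$ from \eqref{second kuranishi} to \eqref{kuranishi} satisfying $H|_p\circ\phi_{\rho\circ g}=\phi_\rho$.

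Next, the composite $(h,H)\circ(\fgam,\Fgam)$ is a morphism of deformations from \eqref{kuranishi} to itself. The identity $(\id_U,\id_{\mcx})$ is another such morphism. The essential uniqueness clause in the definition of a Kuranishi family then gives an open $W\ni p$ on which the two agree after restriction. The same argument applies to $(\fgam,\Fgam)\circ(h,H)$ as an endomorphism of \eqref{second kuranishi}.

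The main technical point is bookkeeping of domains: the morphisms are only defined on neighborhoods, and compositions require shrinking. We would first shrink so that $\fgam$ maps a neighborhood $U_1$ into the domain of $h$ and $h$ maps a neighborhood $U_2$ into the domain of $\fgam$. We then take $W$ small enough that both identities hold on $W$. Setting $U_0:=W\cap\fgam^{-1}(W)$, and using that $h\circ\fgam=\id$ on $W$, the map $\fgam$ restricts to a biholomorphism of a neighborhood of $p$ onto an open set, with inverse $h$. Cartesianness of the pairs then makes $\Fgam$ an isomorphism of the total spaces over these opens, and it preserves the superconformal structure by the definition of a morphism of deformations. The remaining subtlety is that the restriction of the target family \eqref{second kuranishi} is to $\fgam(U_0)$ rather than to $U_0$. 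To get a statement ``possibly shrinking $U$'', we replace $U$ by a neighborhood $U'\subset U_0\cap\fgam(U_0)$ that is stable under $\fgam$. If such stability is not needed, it suffices to state the isomorphism between the restrictions to $U_0$ and to $\fgam(U_0)$, both of which are open neighborhoods of $p$ in $U$.
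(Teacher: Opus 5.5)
Your two-sided argument is correct as far as it goes, and it is what the paper intends. You take morphisms in both directions from the universal property, apply essential uniqueness to the two composites, and track the domains. The paper itself only notes that \eqref{kuranishi} and \eqref{second kuranishi} are both Kuranishi families and invokes the universal property.

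The gap is in your last step. You assert that one can shrink to a neighborhood $U'$ of $p$ with $\fgam(U')=U'$, but a local biholomorphism fixing $p$ need not have any small invariant neighborhood (think of $z\mapsto 2z$ near $0$). Your fallback, an isomorphism between the restrictions to $U_0$ and to $\fgam(U_0)$, is only an isomorphism of germs. It is not an isomorphism of deformations over one shrunken base $(U,p)$, which is what the lemma states. The stronger form is also what is used afterwards: Proposition~\ref{properties of automorphisms}(2) treats $\fgam$ as a self-map of $U$.

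The missing ingredient is that $\Aut_{\mfr}(X)$ is finite. Let $n$ be the order of $\gamma$.
\begin{enumerate}
\item Since $\fgam(p)=p$, the iterate $(\fgam^n,\Fgam^n)$ is defined near $p$ and is a cartesian pair preserving the superconformal structures.
\item By your formula $\Fgam|_p=\phi_\rho\circ\gamma^{-1}\circ\phi_\rho^{-1}$, it satisfies $\Fgam^n|_p\circ\phi_\rho=\phi_\rho\circ\gamma^{-n}=\phi_\rho$. So it is an endomorphism of the deformation \eqref{kuranishi}.
\item Essential uniqueness then gives $\fgam^n=\id$ and $\Fgam^n=\id$ over some open $W'\ni p$.
\item Pick an open $U''\ni p$ with $\fgam^k(U'')\subset W'$ for $0\le k\le n-1$, and set $U':=\bigcup_{k=0}^{n-1}\fgam^k(U'')$.
\item $U'$ is open, because on $W'$ the map $\fgam$ has inverse $\fgam^{n-1}$. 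It contains $p$, and $\fgam(U')=U'$.
\end{enumerate}
Restricting $(\fgam,\Fgam)$ over $U'$ then gives an isomorphism of deformations from \eqref{kuranishi} to \eqref{second kuranishi}, with inverse $(\fgam^{n-1},\Fgam^{n-1})$. With this addition your proof is complete.
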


We can prove a similar statement for group elements $g\notin\on{Stab}_G(p)$: specifically, there is an isomorphism of deformations $(f,F)$ from \eqref{kuranishi} to \eqref{third kuranishi}, after restricting $U$ and $U'$ to neighborhoods of $p$ and $p'$, satisfying
\[
f(p)=p',
\qquad
\pi'\circ F=f\circ\pi,
\qquad
F_p\circ\phi_\rho=\phi_{\rho\circ g}.
\]

There are two further properties of Kuranishi families that we will need: 

\begin{prop}\label{properties of automorphisms}
After shrinking $U$ around $p$, the following hold.
\begin{enumerate}
\item
The family \eqref{kuranishi} is Kuranishi at every point $u\in U$, with central identification $\id_{\mcx_u}$.

\item
For every $u,u'\in U$ and every superconformal isomorphism
$\theta:\mcx_u\lgr\mcx_{u'}$, there is a unique
$\gamma\in\Aut_{\mfr}(X)$ such that the lift from
Lemma~\ref{universality of Kuranshi} satisfies
\[
\fgam(u)=u',
\qquad
\Fgam|_{\mcx_u}=\theta.
\]
\end{enumerate}
\end{prop}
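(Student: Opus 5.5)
For part (1), I would rerun the argument of Lemma~\ref{slice is kuranishi} at a nearby point $u \in U$. Let $\rho_u := \rho_U|_u$ be the induced frame on $\mcx_u$. Its classifying point is $u$ itself, since $U \subset P$ and the family over $U$ is the restriction of the universal framed family. Hence $\phi_{\rho_u} = \id_{\mcx_u}$, which gives the stated central identification.

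The proof of Lemma~\ref{slice is kuranishi} used exactly two inputs at the point. The first is that $U$ is transverse to the orbit $\frak{o}(u)$, so that $da_{e,u}$ is invertible. The second is that framed objects have no nontrivial automorphisms, which holds everywhere by Lemma~\ref{P is alg}. The transversality condition is open. Indeed, the infinitesimal stabilizer vanishes at every point, since $H^0(\mc{A})=0$ for all genus $g>1$ super Riemann surfaces. So $u \mapsto T_u\frak{o}(u)$ is the image of $T_eG$ under a differential of constant (maximal) rank, and it varies smoothly in $u$. Therefore $T_uU \oplus T_u\frak{o}(u) = T_uP$ persists on a neighbourhood of $p$, and I would shrink $U$ to that neighbourhood. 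The rest of the proof of Lemma~\ref{slice is kuranishi} then applies verbatim with $p$ replaced by $u$.

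For part (2), I would use properness of the $G$-action on $P$, which comes from $\mfr \cong [P/G]$ being separated Deligne--Mumford. A superconformal isomorphism $\theta : \mcx_u \lgr \mcx_{u'}$ yields a unique $h \in G$ with $\theta^*(\rho_{u'} \circ h) = \rho_u$, exactly as in the stabilizer computation; this means $a(h,u) = u'$. I would then argue by contradiction. If no shrinking worked, there would be sequences $u_n, u_n' \to p$ and elements $h_n$ with $h_n \cdot u_n = u_n'$. By properness the $h_n$ subconverge to some $h_\infty \in \on{Stab}_G(p)$.

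Next I would use the local diffeomorphism $a|_{W\times U}$ near each of the finitely many $g \in \on{Stab}_G(p)$. Concretely, I would apply it at $(g,p)$ after translating by $g$. Near $g$, the equation $h \cdot u = u'$ with $u, u' \in U$ then forces $h = g \cdot w$ with $w \in W$. Using the isomorphism $(\fgam,\Fgam)$ of Lemma~\ref{universality of Kuranshi}, whose base map $\fgam$ is the slice-holonomy of $g$, I would show that $w$ is uniquely determined and that $u' = \fgam(u)$. By construction $\Fgam$ restricted to the fibre over $u$ is then the framed isomorphism induced by $h$, and this equals $\theta$ by the uniqueness of framed isomorphisms.

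Uniqueness of $\gamma$ follows from two facts. The neighbourhoods of distinct $g \in \on{Stab}_G(p)$ are disjoint after shrinking. A framed isomorphism determines $h$, and $h$ determines which neighbourhood it lies in.

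I expect the main obstacle to be this properness step. It needs the separatedness of $\mfr$ to be transported into the analytic setting as properness of $G \times P \to P \times P$. It also needs care on odd directions. Since everything is determined by bosonic reductions together with uniqueness of framed isomorphisms, I would first argue on $U_{\bos}$ and then extend to $U$ using the fact that the constructed maps are morphisms of supermanifolds.
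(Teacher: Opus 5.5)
The paper states this proposition without proof; it is the super analogue of the corresponding result in \cite{AC}. So there is no argument in the text to compare yours against. Your proof is the standard one from that setting, transplanted to the slice $U\subset P$, and it goes through.

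You can simplify (1). Once $U$ is shrunk so that $a|_{W\times U}:W\times U\lgr V$ is an isomorphism, its differential is invertible at every $(e,u)$, and $(a|_{W\times U})^{-1}(u)=(e,u)$. This is all that the proof of Lemma~\ref{slice is kuranishi} uses at $u$, so the constant-rank argument is unnecessary.

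In (2) there is a harmless slip. Frames are acted on from the right, so the element attached to $\theta$ should be defined by $\theta^*\rho_{u'}=\rho_u\circ h$, which gives $a(h,u)=u'$. Your equation $\theta^*(\rho_{u'}\circ h)=\rho_u$ gives $a(h,u')=u$ instead.

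The step you only announce is a short computation. Write $a(g,u)=a\bigl(w_g(u),\fgam(u)\bigr)$ for the slice decomposition defining $\fgam$; near $p$ this agrees with the lift of Lemma~\ref{universality of Kuranshi} by essential uniqueness. Then
\[
u'=a(h,u)=a\bigl(w_g(u)\,g^{-1}h,\ \fgam(u)\bigr).
\]
For $u$ near $p$ and $h$ near $g$, the element $w_g(u)g^{-1}h$ lies in $W$. Injectivity of $a|_{W\times U}$, compared against $u'=a(e,u')$, gives $u'=\fgam(u)$ and $h=g\,w_g(u)^{-1}$. Hence $\Fgam|_{\mcx_u}$ and $\theta$ are both framed isomorphisms $(\mcx_u,\rho_u\circ h)\lgr(\mcx_{u'},\rho_{u'})$, so they are equal. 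The same formula for $h$ gives uniqueness of $\gamma$, once the sets $gW^{-1}$ for $g\in\on{Stab}_G(p)$ are pairwise disjoint.

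The properness step is not a real obstacle. The map $G\times P\to P\times P$ is the base change of the diagonal of $[P/G]\cong\mfr$, hence proper. Properness is detected on bosonic reductions, and analytification preserves it. Alternatively, the finiteness of the superconformal Isom space over $U\times U$ from \cite[Proposition~4.3]{CV}, already used in Lemma~\ref{lemma :dig is rep}, gives the subconvergence directly.

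Odd directions need no separate treatment, since (2) only concerns $\bb$-points $u,u'$ and the fibres over them.

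One thing you should add: for $\fgam$ to be an automorphism of the shrunken base, you should shrink $U$ to a neighbourhood of $p$ stable under all $\fgam$. This is possible because $\gamma\mapsto\fgam$ is compatible with composition near $p$, again by essential uniqueness.
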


\subsection{The definition of super Teichm\"uller space} \label{sec: definition of STM}

Fix an oriented topological surface $\Sigma$ of genus $g>1$. A \emph{Teichm\"uller structure}, or marking, on a super Riemann surface $X$ is an isotopy class $[f]$ of orientation-preserving homeomorphisms
\[
f:X\to\Sigma.
\]
An isomorphism of marked super Riemann surfaces
$(X,[f])\lgr(X',[f'])$ is a superconformal isomorphism
$\phi:X\lgr X'$ satisfying
\[
[f'\circ\phi]=[f].
\]
Note that the marking is purely topological data, so a marking on a super Riemann surface is just a marking on its underlying ordinary family of curves. 
\medskip 

For a family $\eta:X\to S$, a \emph{pre-marking} is a collection
$
\{[f_s: X_s\to\Sigma]\}_{s\in S}.$ 
A pre-marking is a \emph{marking} if, for every $s\in S$, we can find an open neighborhood $U\subset S$ of $s$ with a homeomorphism $h: X \vert_U \lgr X_s \times U$ such that the restriction $F_u$ of the composition
\[ F: X \vert_U \lgr X_s \times U \xra{f_s \times \id_U} \Sigma \times U\]
to every point $u \in U$ satisfies
\[ [F_u] = [f_u]. \] 
For a morphism $\alpha: T \to S$, the pullback of a marking $\{[f_s]\}$ on $X$ to a marking on $X_T$ is defined by 
\[ \alpha^* \{ [f_s] \}= \{ [f_{\alpha(t)} ] \}_{t \in T}.\]

\begin{definition} A \emph{family of marked super Riemann surfaces} over an analytic superspace $S$ is a family $X\to S$ together with a marking $\{[f_s]\}_{s\in S}$.
An isomorphism
\[
(X/S,\{[f_s]\})\lgr(X'/S,\{[f_s']\})
\]
is a superconformal isomorphism $\Phi:X\lgr X'$ over $S$ satisfying
\[
[f_s]=[f_s'\circ \Phi_s]
\qquad
\text{for every }s\in S.
\]
\end{definition}
We write $\tg$ for the category fibered in groupoids over analytic superspaces whose objects over $S$ are families of marked super Riemann surfaces over $S$, and whose morphisms over $S$ are the isomorphisms just defined.
\medskip

A marked super Riemann surface over a point has exactly one non-trivial automorphism, called the \emph{fermionic reflection}.
To see this, recall that super Riemann surfaces over a point correspond to spin curves. The same equivalence identifies families over a purely bosonic base with families of spin curves. A spin curve is a triple $(C,L,\alpha)$ consisting of a Riemann surface $C$, a holomorphic line bundle $L$, and an isomorphism
\[
\alpha:L^{\otimes2}\lgr K_C.
\]
A marking on $(C,L,\alpha)$ is a marking $[f]$ on $C$.

An automorphism of a marked spin curve is a pair $(\phi,\theta)$, where $\phi:C\lgr C$ satisfies $[f\circ\phi]=[f]$ and
$\theta:\phi^*L\lgr L$ satisfies
\[
\alpha\circ\theta^{\otimes2}
=d\phi^*\circ\phi^*\alpha.
\]
Here $d\phi^*:\phi^*K_C\lgr K_C$ is the isomorphism induced by the differential.

Compatibility with the marking forces $\phi=\id_C$. The remaining condition is therefore
$\alpha\circ\theta^{\otimes2}=\alpha$. Since $\theta=a\,\id_L$ for some $a\in\bb^\times$, this condition gives $a^2=1$. Thus
\[
\theta\in\{\id_L,-\id_L\}.
\]
It follows that the automorphism group of a marked spin curve, and hence of a marked super Riemann surface over a point, is $\z/2\z$.


 \subsubsection{Marked Kuranishi families}

 A \emph{deformation of a marked super Riemann surface $(X,[f])$} over a pointed analytic superspace $(S, s_0)$ is a family of marked super Riemann surfaces $(\eta: Y \to S, \ [g_s] )$ with a fixed isomorphism $\psi: X \lgr Y\vert_{s_0}$ preserving both the superconformal structure and the marking: $[g_{s_0} \circ \psi]=[f]$. 
A \emph{marked Kuranishi family} for $(X, [f])$ is a marked deformation $(\pi: \mcx \to (B,b_0), \ [f_b], \  \phi: X \lgr \mcx_{b_0})$ over a $3g-3|2g-2$-dimensional pointed space $(B,b_0)$ with the analogous universal property: If $(\eta: Y \to (S,s_0), \ [g_s], \ \psi)$ is a marked deformation of $(X,[f])$ then there exists an open neighborhood $U \subset S$ of $s_0$ and an essentially unique cartesian pair $(h, H)$ where, in addition to the properties defined in the previous section, the morphism $H: Y|_U \to \mcx$ preserves the marking $[g_s]=[f_{h(s)} \circ H_s]$ for all $s \in U$. 
\medskip

Marked Kuranishi families always exist. Indeed, for any marked super Riemann surface $(X,[f])$, the marking $[f]$ can be extended uniquely to a marking $ [f_b]$ on the Kuranishi family \eqref{kuranishi} for $X$ such that $[f_{b_0} \circ \phi]=[f]$, and one then checks that the family
\begin{equation} \label{marked kuranishi}
    (\pi: \mcx \to (B,b_0), \  \ [f_b], \ \  \phi: X \lgr \mcx_{b_0})
\end{equation}
is a marked Kuranishi family for $(X,[f])$.

\subsection{$\tg$ is an analytic Deligne-Mumford superstack}

Ordinary Teichm\"uller space is a complex manifold. We cannot expect the same to hold for $\tg$ since  objects in $\tg$ have non-trivial automorphisms. Nevertheless,  $\tg$ has relatively simple geometry, namely, it is a Deligne-Mumford superstack with an explicit \'etale atlas given by a disjoint union of bases of marked Kuranishi families 
\begin{equation}
    \label{etale atlas for tg}
    \beta: \mc{B} \to \tg.
\end{equation}
where $\mc{B}$ is the disjoint union of the Kuranishi bases and $\beta$ is the disjoint union of the corresponding classifying maps. 

We first prove that all maps from complex analytic superspaces into $\tg$ are representable. Proving that $\tg$ is Deligne-Mumford then reduces to proving that the individual classifying maps in  \eqref{etale atlas for tg} are \'etale. 
 
\begin{lemma} \label{lemma :dig is rep}
    The diagonal morphism $\Delta: \tg \to \tg \times \tg$ is representable. 
\end{lemma}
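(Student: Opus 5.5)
The approach is the standard reduction: $\Delta$ is representable exactly when, for every analytic superspace $S$ and every pair of objects $(X/S,\{[f_s]\})$ and $(X'/S,\{[f'_s]\})$ of $\tg(S)$, the sheaf
\[
\underline{\on{Isom}}_S\big((X,[f]),(X',[f'])\big):(T\to S)\longmapsto\{\text{marked isomorphisms }X_T\lgr X'_T\}
\]
is represented by an analytic superspace over $S$. I would get this from the corresponding statement for $\mfr$. By \cite[Theorem~5.3]{CV}, $\mfr$ is an analytic Deligne--Mumford superstack, so its diagonal is representable (indeed finite and unramified). Hence the sheaf $I:=\underline{\on{Isom}}_S(X,X')$ of superconformal isomorphisms is represented by an analytic superspace $I\to S$ that is finite and unramified over $S$.

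The key step is to show that the marked isomorphism sheaf is an open and closed subsheaf of $I$. A $T$-point of $I$ is a superconformal isomorphism $\Phi:X_T\lgr X'_T$. The condition $[f_{\alpha(t)}]=[f'_{\alpha(t)}\circ\Phi_t]$ for every $t\in T$ depends only on the underlying topological points of $T$ and on the bosonic reduction of $\Phi$. So the marked isomorphisms form the subfunctor of $I$ cut out by a subset $|I^{m}|\subset|I|$, and it remains to prove that this subset is open and closed. Take a point $x\in|I|$ over $s\in S$. Since $I\to S$ is finite and unramified, $I$ is locally near $x$ a closed subspace of a neighborhood $U$ of $s$. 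Using the local trivializations $h:X|_U\lgr X_s\times U$ and $h':X'|_U\lgr X'_s\times U$ from the definition of a marking, the universal isomorphism over a connected neighborhood of $x$ gives a continuous family of homeomorphisms $X_s\to X'_s$. All maps in a continuous family over a connected base are isotopic. Therefore the element $[f'_u\circ\Phi_u\circ f_u^{-1}]$ of the mapping class group of $\Sigma$ is locally constant on $|I|$. The marked locus is the set where this class is trivial, so it is open and closed.

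It follows that the marked isomorphism sheaf is represented by the open and closed analytic subsuperspace $I^{m}\subset I$. Any open and closed subspace of $I$ is again an analytic superspace, finite and unramified over $S$. Compatibility with base change $T\to S$ is immediate, because markings pull back pointwise. This proves the representability of $\Delta$. The same argument shows that $\Delta$ is finite and unramified, which will be needed for the Deligne--Mumford property.

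I expect the main obstacle to be the local constancy of the isotopy class, specifically making precise that the continuous families produced by the local trivializations are isotopies. I would handle this by restricting to bosonic reductions, so that everything is a continuous family of homeomorphisms of the fixed surface $\Sigma$, and then using that $\on{Homeo}^+(\Sigma)\to\on{Mod}(\Sigma)$ is locally constant. Any path in $|I|$ therefore yields an isotopy.
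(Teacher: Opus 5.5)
Your proof is correct and follows essentially the same route as the paper. Both reduce to the marked isomorphism functor, use that the superconformal $\underline{\on{Isom}}$ functor $I$ is a finite unramified analytic superspace over $S$ (the paper cites \cite[Proposition~4.3]{CV} directly rather than deducing it from the Deligne--Mumford property), and show that the marking condition cuts out an open locus because the isotopy class is locally constant on path-connected neighborhoods. Your version spells out that local constancy through the trivializations $h,h'$, and it additionally shows the locus is closed, which gives finiteness of $\Delta$. That is more than the paper proves or needs for this lemma.
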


\begin{proof}
The diagonal morphism is representable if for every map $(f,g): S \to \tg \times \tg$ with corresponding marked families denoted $(X\to S, [f_s])$ and $(Y\to S, [g_s])$, the marked isomorphism functor
\[
I':=\underline{\operatorname{Isom}}_{\tg(S)}(X,Y): \sS/S \to \on{Set}
\]
is representable by a complex analytic superspace. 

It is known that the superconformal isomorphism functor 
\[
I:=\underline{\operatorname{Isom}}_{\mfr(S)}(X,Y): \sS/S \to \on{Set}
\]
is a
finite and unramified complex analytic superspace over $S$ \cite[Proposition~4.3]{CV}, and so it suffices to prove that the obvious inclusion $I' \subset I$ is representable by open immersions. That is, for an $S$-space $j: T \to S$ and every map $T \to I$, the base change map $p: I_T':=I' \times_I T \to T$ is an open immersion.

Let $\alpha: X_T \lgr Y_T$ be the superconformal isomorphism corresponding to $T \to I$, and define
\[
U:=\{t\in|T|:[g_{j(t)} \circ \alpha_{j(t)}]=[f_{j(t)}]\}.
\]
Note that $U$ is open: Indeed, for every $u_0 \in U$ the fact that the two markings $[g_{j(u_0)} \circ \alpha_{j(u_0)}]$ and $[f_{j(u_0)}]$ agree implies that they agree on some  
 path-connected open neighborhood of $u_0$, and this open neighborhood is contained in $U$ by definition. 

Let $T'$ be an analytic superspace admitting a map $\beta': T' \to I_T'$ and set $\beta= p \circ \beta': T' \to T$.  Then by definition of $I_T'$, $\beta^* \alpha: X_{T'} \cong Y_{T'}$ is marking-preserving, so 
\[ [g_{(j \circ \beta)(t')} \circ \alpha_{(j \circ \beta)(t')}]= [f_{(j\circ \beta)(t')}]. \]
By definition of $U$, $\beta(t') \in U$, and thus $\beta$ factors through $U$. 
Conversely, suppose that a map $\beta: T' \to T$ factors through $U$, then $\beta^* \alpha$ satisfies the above equality which is exactly the marking-preserving condition, and so $\beta$ lifts to a map $\beta': T' \to I'$. 

\end{proof}


\begin{prop} \label{super teich is DM}  The map $B \to \tg$ corresponding to the family \eqref{marked kuranishi} 
is representable and \'etale. Furthermore, taking the disjoint union over all maps from all Kuranishi bases of marked super Riemann surfaces over a point gives an \'etale atlas
\begin{equation} \label{etale atlas super Teich} \alpha:  \mc{B} \to \tg.  \end{equation}

\end{prop}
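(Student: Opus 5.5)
Representability of $B\to\tg$ comes for free from Lemma~\ref{lemma :dig is rep}: since the diagonal of $\tg$ is representable, every morphism from an analytic superspace into $\tg$ is representable. Concretely, for $S\to\tg$ classifying $(Y\to S,[g_s])$, the fibre product $B\times_{\tg}S$ is the marked isomorphism space $\underline{\on{Isom}}_{\tg}(\mcx_S,Y_B)$ over $B\times S$. This is an open subspace of the superconformal isomorphism space $I$, which is finite and unramified over $B\times S$. So the real content is étaleness. I will show that the projection $E:=B\times_{\tg}S\to S$ is a local isomorphism of analytic superspaces, working locally at a point $e=(b,s,\theta)$ of $E$.

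\textbf{Étaleness.} Fix such a point. Here $\theta:\mcx_b\lgr Y_s$ is a marked superconformal isomorphism. By Proposition~\ref{properties of automorphisms}(1), after shrinking $B$ the family \eqref{marked kuranishi} is a marked Kuranishi family at $b$, with central identification $\id_{\mcx_b}$. The marking extends, as in the discussion preceding \eqref{marked kuranishi}.

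Regard $(Y\to(S,s),\theta)$ as a marked deformation of $(\mcx_b,[f_b])$. The universal property gives a neighbourhood $W\ni s$ and a cartesian, marking-preserving pair $(h,H)$ with $h:W\to B$, $h(s)=b$ and $H_s\circ\theta=\id$. This datum is precisely a section $\sigma:W\to E$ of $E\to S$ through $e$.

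Conversely, I take any local morphism $T\to E$ near $e$, with $T\to S$ landing in $W$. Such a morphism consists of a map $T\to B$ and a marked isomorphism $\mcx_T\cong Y_T$, which together form a morphism of marked deformations. Essential uniqueness in the Kuranishi property then forces $T\to E$ to factor through $\sigma$ after shrinking. This shows that $\sigma(W)$ is an open neighbourhood of $e$ in $E$ and that $E\to S$ restricts to an isomorphism $\sigma(W)\to W$.

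The main obstacle is this last step. Essential uniqueness is only formulated for deformations over pointed bases, and it must be applied with the marking constraint, which only pins down the central identification up to the fermionic reflection. My approach is to use the fact that a point of $E$ already fixes $\theta$ exactly. Only the identity automorphism is then compatible with $H_s\circ\theta=\id$. The points $(b,s,\pm\theta)$ are distinct points of $E$, each with its own local section. Because $I\to B\times S$ is unramified, $E$ has no nilpotent directions along the fibres, so agreement of maps near $e$ is enough to conclude.

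\textbf{Surjectivity.} Let $(Y\to S,[g_s])$ be any family and $s\in S$ a point. The fibre $(Y_s,[g_s])$ is a marked super Riemann surface over a point, so its marked Kuranishi family \eqref{marked kuranishi} is one of the components of $\mc{B}$. Taking $\theta=\id$ gives a point of $\mc{B}\times_{\tg}S$ lying over $s$. Hence $\alpha$ is a representable, étale surjection, i.e.\ an étale atlas. Together with Lemma~\ref{lemma :dig is rep}, this shows that $\tg$ is an analytic Deligne--Mumford superstack.
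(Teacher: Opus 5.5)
Your proposal is correct and follows essentially the same route as the paper. Representability comes from Lemma~\ref{lemma :dig is rep}. The Kuranishi property of $(\mcx\to(B,b),\id_{\mcx_b})$, via Proposition~\ref{properties of automorphisms}(1), gives a local section of $p\colon B\times_{\tg}S\to S$ through $e$; essential uniqueness, applied to the universal object near $e$, shows this section is a local inverse of $p$; surjectivity then follows. The paper makes your last step precise by applying uniqueness to the identity of $p^{-1}(U)$ and verifying that $p$ restricts to an isomorphism $W\cong U'=\psi_U^{-1}(W)$, rather than asserting directly that $\sigma(W)$ is open. Your appeal to unramifiedness is harmless but unnecessary, since essential uniqueness already gives equality of the full cartesian pairs.
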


\begin{proof} Let $S \to \tg$ be a morphism from a complex analytic superspace $S$ and let 
\[ (Y \xra{\eta} S, [g_s])\]
be the corresponding family of marked super Riemann surfaces. Set $B_S:= B \times_{\tg} S$, and let $p:B_S \to S$ and $q: B_S \to B$ denote the projection maps. 

The map $p$ is representable by \ref{lemma :dig is rep}, so it remains to show that it is \'etale. For every point $e \in B_S$, we will show there exists an open neighborhood $W$ of $B_S$ containing $e$ such that the restriction $p \vert_W: W \to S$ is an open immersion. 

By definition of the fiber product, a point $e \in B_S$ is a triple 
\[ (b=q(e), \ s=p(e), \ \sigma: \mcx_b \lgr Y_s)\]
where $\sigma$ is an isomorphism of marked super Riemann surfaces, so $[g_s \circ \sigma]=[f_b]$. 
Adding the isomorphism $\sigma$ to the data $(\eta: Y \to S, [g_s])$ makes the resulting triple a deformation of $\mcx_b$. The Kuranishi family of $(\mcx_b, [f_b])$ is $(\mcx \to (B,b), \id_{\mcx_b})$, and so there exists an open neighborhood $U \ni s$ in $S$ and a cartesian morphism
\[ \begin{tikzcd}
    Y|U \arrow[r, "F"] \arrow[d, "\eta_U"] & \mcx \arrow[d, "\pi"] \\
    U \arrow[r, "f"] & B. 
 \end{tikzcd}\]
 Let $\theta: \mcx \times_{B,f} U \lgr Y \vert_U$ be the induced isomorphism of marked families. 

 The triple $(i_U: U \subset S, f:U \to B, \theta)$ is an object in $B_S(U)$, and we write 
 \[ \psi:=(i_U, f): U \to B_S\]
for the corresponding map. This map factors uniquely into the composition
\[ U \xra{\psi_U} p\inv(U)=B \times_{\tg} U \xra{p_1} U, \qquad p_1 \circ \psi_U = \id_U.\]
We will now show that there exist open sets $W \subset p \inv(U)$ containing $e$ and $U' \subset U$ containing $s$, such that $p_1 \vert_W: W \to U$ factors as $ W \overset{p_W}{\cong} U' \subset U$. Note that this implies that $p\vert_W=p_W$ is an open immersion. 

Let $q_1$ denote the composition $p \inv(U) \subset B_S \xra{q} B$. By definition of the fiber product applied to $p \inv(U)= B \times_{\tg} U$, the identity morphism $\id_{ p \inv(U)}$ corresponds to a universal object 
\begin{equation} \label{universal triple} (p_1: p \inv(U) \to U, \ q_1: p \inv(U) \to B, \ \theta': \qonepull \mcx \lgr \ponepull (Y \vert_U)).\end{equation}
The triple corresponding to the map $\psi_U: U \to p \inv(U)$ is
\[ (\id_U, f:U \to B, \theta: \fpull \mcx \lgr Y \vert_U) \in p \inv(U)(U)\]
where $\psi_U \pull(\theta')=\theta$, and the triple corresponding to  $\psi_U \circ p_1: p \inv(U) \to p \inv( U)$ is
 \begin{equation} \label{comparison triple} (p_1,  \ f \circ p_1: p \inv(U) \to B, \  \ponepull \theta=(\psi_U \circ p_1) \pull \theta': (f \circ p_1)\pull \mcx \lgr \ponepull (Y \vert_U)).\end{equation}

 The first step is to show that there exists $W \subset p \inv(U)$ containing $e$ such that $\id_{p \inv(U)} \vert_W= \id_W$ is equal to $(\psi_U \circ p_1) \vert_W$. Proving this is equivalent to proving that there exists such a $W$ along which \eqref{universal triple} and \eqref{comparison triple}  agree. 

 Since $e \in p \inv(U)$, $\theta_e': \mcx_b \cong Y_s$ is equal to $\sigma=\ponepull(\theta)_e$. Let
 \[ \gamma':= \pr_{\mcx} \circ \theta'\inv, \qquad \gamma:=\pr_{\mcx} \circ (\ponepull \theta)\inv\]
 denote the composites of the horizontal maps in the following diagrams: 
 \[ \begin{tikzcd}
     (\ponepull Y \vert_U) \arrow[r, "\theta'\inv"] \arrow[dr] & \qonepull \mcx \arrow[d] \arrow[r, "\pr_{\mcx}"] & \mcx \arrow[d] \\
     {} & p \inv(U) \arrow[r,"q_1"] & B
 \end{tikzcd} \quad \begin{tikzcd}
     (\ponepull Y \vert_U) \arrow[r, "(\ponepull \theta)\inv"] \arrow[dr] & (f \circ p_1)\pull \mcx \arrow[d] \arrow[r, "\pr_{\mcx}"] & \mcx \arrow[d] \\
     {} & p \inv(U) \arrow[r,"f \circ p_1"] & B
 \end{tikzcd} \]
 where $\pr_{\mcx}$ in both diagrams just denotes the canonical projection. 
 The pairs 
 \[(q_1, \gamma'), \qquad (f \circ p_1, \gamma)\]
are morphisms of marked deformations of $\mcx_b$: 
\[ (\ponepull(Y \vert_U) \xra{\eta} (p \inv(U), e), \sigma) \to (\mcx \to (B,b), \id_{\mcx_b}).\]
Since the target is a Kuranishi family for $(\mcx_b,[f_b])$, and maps from a deformation to a Kuranishi family are essentially unique, there exists an open neighborhood $W \subset p \inv(U)$ containing $e$ such that
\[ (f \circ p_1) \vert_W=q_1 \vert_W, \qquad (\gamma')\vert_{\eta \inv(W)} =(\gamma) \vert_{\eta \inv(W)}.\]

We are left to show that $\theta'$ and $\ponepull \theta$ agree after restriction to $\eta \inv(W)$, or more precisely that their inverses agree after restriction to $\eta \inv(W)$. 
Since $q_1 \vert_W= (f \circ p_1) \vert_W$, we can write
\[ \mcx \times_B W:= \mcx \times_{B, q_1\vert_W} W = \mcx \times_{B, (f \circ p_1) \vert_W} W  \]
and consider the diagram 
\[ 
\begin{tikzcd}
\eta^{-1}(W) \arrow[r] \arrow[r, shift left] \arrow[rd, "\eta \vert_W"] & \mathcal{X} \times_B W \arrow[r, "\operatorname{pr}_{\mathcal{X}}"] \arrow[d, "\pi_W"] & \mathcal{X} \arrow[d, "\pi"] \\
                                                                        & W \arrow[r]                                                                            & B                           
\end{tikzcd}\]
where the two arrows from $\eta \inv(W)$ to $\mcx \times_B W$
are $\theta'\inv \vert_W$ and $(\ponepull \theta)\inv \vert_W$. Note that the two maps are equal if their composites with $\pr_{\mcx}$ and $\pi_W$ are equal. This holds because the composites of these maps with $\pr_{\mcx}$ are exactly $\gamma' \vert_{\eta \inv(W)}$ and $\gamma \vert_{\eta \inv(W)}$. 
Thus, we have shown that \eqref{universal triple} and \eqref{comparison triple} agree after restriction to $W$ and hence $\id_W= (\psi_U \circ p_1) \vert_W$.

Set $U' := \psi_U \inv(W) \subset U$, and note that $p_1 \vert_W: W \to U$ factors through $U'$. Denote this map by $p_W: W \to U'$. Furthermore, $\psi_U \vert_{U'}: U' \to p \inv(U)$ factors through $W$. Denote the resulting map by $\tilde{\psi}_U: U' \to W$, and note that we have a commutative diagram of restriction maps
\[ \begin{tikzcd} W \arrow[d, "i_W"] \arrow[r,"p_W"]& U' \arrow[d,"i_{U'}"] \arrow[r,"\tilde{\psi}_U"] & W \arrow[d, "i_W"] \arrow[r,"p_W"] & U' \arrow[d,"i_{U'}"]  \\
p \inv(U) \arrow[r,"p_1"] & U \arrow[r, "\psi_U"] & p \inv(U) \arrow[r,"p_1"] & U. 
\end{tikzcd} \]
where the subscripted symbols $i$ denote the canonical inclusions of the corresponding open subsets. 

By commutativity of the diagram and $\psi_U \circ p_1 \circ i_W= (\psi_U \circ p_1) \vert_W= \id_W$, we have that
\[ i_W \circ (\tilde{\psi_U} \circ p_W)=\id_W \ \Longrightarrow \tilde{\psi_U} \circ p_W=\id_W\]
since $i_W$ is an open immersion and thus monic. Similarly, 
\[ i_{U'} \circ (p_W \circ \tilde{\psi}_U) = p_1 \circ \psi_U \circ i_{U'}=i_{U'} \Longrightarrow p_W \circ \tilde{\psi}_U= \id_{U'}.\]
Thus, $p_W: W \to U'$ is an isomorphism, and thus $p \vert_W$ factors as $W \overset{p_W}{\cong} U' \subset U$, so it is an open immersion.

The statement that taking the disjoint union over all bases is an \'etale atlas is clear: The map is surjective as we have a marked Kuranishi family for every point in $\tg$, and each individual map in the disjoint union is \'etale. 
\end{proof}

\section{A Teichm\"uller theorem for spin and super Riemann surfaces}
\label{sec:teichmuller-theorem}

We first prove a version of Teichm\"uller's theorem for marked spin curves. This gives an analytic atlas whose underlying space is a disjoint union of open balls. We then lift this atlas to $\tg$. Each connected component of the resulting atlas has contractible underlying topological space.

Teichm\"uller's theorem \cite[Theorem~6.5]{AC} gives a homeomorphism
\begin{equation}\label{teich theorem}
\Phi:\ball:=\{q\in H^0(C,K_C^2)\mid \|q\|_1<1\}\longrightarrow T_g'
\end{equation}
from the unit ball to the moduli space $T_g'$ of marked continuous families of Riemann surfaces. The space $T_g'$ is canonically isomorphic to the topological space $|T_g|$ underlying $T_g$, so one usually states the theorem with $T_g$ as the target.

Throughout, we use $\ctop \to\ball$ to denote the continuous marked family whose classifying map is the homeomorphism in \eqref{teich theorem}. 

\subsection{Continuous families of marked spin curves}

A family of spin curves over a complex analytic space $A$ consists of a family of curves $\pi: C \to A$, a line bundle $\mcl$ on $C$, and an isomorphism
\[
\alpha:\mcl^{\otimes2}\lgr K_{C/A}.
\]
A marking is a marking on the underlying family of curves $C\to A$. An isomorphism of marked spin families consists of a marking-preserving isomorphism of spin curves. We write $ST_g$ for the analytic stack of spin curves. 

\begin{lemma}\label{lem:spin-bosonic}
There is a natural equivalence $(\tg)_{\bos}\simeq ST_g$. In particular, the bosonic reductions of the Kuranishi charts for $\tg$ form an \'etale atlas for $ST_g$.
\end{lemma}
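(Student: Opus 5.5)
The plan has two parts. First I identify the restriction of $\tg$ to purely bosonic analytic spaces with $ST_g$ as categories fibered in groupoids. Then I check that bosonic reduction of the atlas from Proposition~\ref{super teich is DM} gives an étale atlas of $ST_g$.

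\emph{The equivalence.} Let $A$ be an ordinary complex analytic space. The analytic equivalence between super Riemann surfaces over bosonic bases and spin curves \cite[Proposition~4.2]{CV} sends a family $X\to A$ to $(C=X_{\bos}\to A,\ L,\ \alpha)$, where $\mc{O}_X=\mc{O}_C\oplus\Pi L$. It is functorial in $A$, so it commutes with pullback.

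Two compatibilities with markings have to be checked.
\begin{enumerate}
\item[(a)] A marking on $X/A$ is by definition topological data on $|X|=|C|$. The local triviality condition refers only to the underlying topological family. So markings of $X/A$ and markings of $C/A$ coincide, and they pull back in the same way.
\item[(b)] A superconformal isomorphism $\Phi:X\lgr X'$ corresponds to a pair $(\phi,\theta)$ with $\phi=\Phi_{\bos}$. Hence $[f'_a\circ\Phi_a]=[f_a]$ holds if and only if $[f'_a\circ\phi_a]=[f_a]$.
\end{enumerate}
Therefore the equivalence restricts to full subcategories and full morphism sets, giving $(\tg)_{\bos}\simeq ST_g$. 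I expect this part to be routine bookkeeping.

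\emph{The atlas.} Let $\beta:\mc{B}\to\tg$ be the étale atlas of Proposition~\ref{super teich is DM}. I consider the restrictions $\mc{B}_{\bos}\to(\tg)_{\bos}\simeq ST_g$, where $\mc{B}_{\bos}$ is the disjoint union of the bosonic reductions $B_{\bos}$ of the Kuranishi bases. The family over $B_{\bos}$ is the pullback of the marked Kuranishi family along $B_{\bos}\hra B$.

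Take any ordinary analytic space $S$ and a map $S\to ST_g$. Via the equivalence this is a map $S\to\tg$. The key claim is that
\[
\mc{B}_{\bos}\times_{ST_g}S\;\cong\;(\mc{B}\times_{\tg}S)_{\bos}=(\mc{B}\times_{\tg}S)\times_{\mc{B}}\mc{B}_{\bos}.
\]
This holds because any map from a bosonic space into $\mc{B}$ factors uniquely through $\mc{B}_{\bos}$.

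By Proposition~\ref{super teich is DM}, the space $\mc{B}\times_{\tg}S$ is an analytic superspace and is étale over $S$. It is locally isomorphic to open subsets of $S$, so it is itself bosonic. Hence it equals its bosonic reduction, and the base change $\mc{B}_{\bos}\times_{ST_g}S\to S$ is étale. Surjectivity follows because every marked spin curve over a point is a marked super Riemann surface over a point, and it therefore lies in the image of some Kuranishi base. Representability of the diagonal of $ST_g$ can be deduced the same way from Lemma~\ref{lemma :dig is rep}, since bosonic Isom spaces are bosonic reductions of the super ones.

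\emph{Main obstacle.} The delicate step is the fiber product identity above. One must check that the 2-fiber products computed in the bosonic category agree with those computed in the super category. This comes down to one fact: an étale map to a bosonic base has bosonic source, since such a map is locally an open immersion, as shown in the proof of Proposition~\ref{super teich is DM}. I would make that local-open-immersion structure explicit and use it in the argument.
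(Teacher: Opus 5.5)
The paper gives no proof of this lemma. It treats the statement as immediate from the equivalence between super Riemann surfaces over bosonic bases and spin curves \cite[Proposition~4.2]{CV}, together with its earlier remark that a marking is purely topological data. Your first part makes that implicit argument explicit. Your second part supplies the verification of the ``in particular'' clause, which the paper omits. The argument is correct.

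Two points of presentation should be tightened.

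First, marked families do not form a full subcategory of unmarked ones. What you actually use is two separate facts:
\begin{enumerate}
\item Markings transfer bijectively along $|X|=|C|$. This gives essential surjectivity: transport a marking along the underlying homeomorphism $\phi$ of an isomorphism between the spin curve of $X$ and a given $(C,L,\alpha)$.
\item The marking condition on an isomorphism depends only on $\Phi_{\bos}$. This gives full faithfulness.
\end{enumerate}

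Second, the identity $(\mc{B}\times_{\tg}S)_{\bos}=(\mc{B}\times_{\tg}S)\times_{\mc{B}}\mc{B}_{\bos}$ is not true in general. The right-hand side only kills the odd ideal pulled back from $\mc{B}$, not all odd functions on the fiber product. It is also not needed. Unique factorization of maps from bosonic spaces through $\mc{B}_{\bos}$, combined with your first part for the isomorphism datum, already shows that $\mc{B}_{\bos}\times_{ST_g}S$ is represented by $(\mc{B}\times_{\tg}S)_{\bos}$. The proof of Proposition~\ref{super teich is DM} shows that $\mc{B}\times_{\tg}S\to S$ is a local open immersion, so $\mc{B}\times_{\tg}S$ is bosonic and equals its own reduction.

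The step you flag as the main obstacle is therefore settled by exactly the local-open-immersion property you cite. It would be cleaner to state bosonicity first and then read off the fiber-product identification from it.
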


We now work over the category of topological manifolds.  A \emph{continuous family of marked spin curves} is a continuous family of marked Riemann surfaces $\pi: C\to A$ in the sense of \cite[Section~5]{AC}, a continuous complex line bundle $\mcl$ with fiberwise holomorphic transition functions, and a continuous, fiberwise holomorphic isomorphism $\alpha:\mcl^{\otimes2}\lgr K_{C/A}$.


An isomorphism is a marking-preserving homeomorphism over $A$ that is biholomorphic on each fiber, together with an isomorphism of the spin line bundles preserving $\alpha$. We denote the stack of these families by $ST_g'$. 
\medskip

Let $C:= C_0^{top}$ denote the central fiber of the continuous family of Riemann surfaces $\ctop \to \ball$ picked out by the Teichm\"uller homeomorphism. By definition, $C$ is a Riemann surface, so we can equip it with a spin structure $(L_0, \alpha_0)$. 

The next lemma collects statements about extending spin structures and isomorphisms thereof from a fiber to a neighborhood of the base point of that fiber. These statements are well known, but we include them for future reference. 

\begin{lemma}[Extension Lemma] \label{the extension lemmas}
Let $\pi:Y\to S$ be a continuous family of Riemann surfaces over
a topological manifold $S$, and fix a point $0\in S$.
For an open neighbourhood $U\subset S$ of $0$, write
$Y_U:=\pi^{-1}(U)$.

\begin{enumerate}
    \item \emph{Extension of objects.}
    For every spin structure $(L_0,\alpha_0)$ on $Y_0$, we can find an open neighborhood $U\subset S$ of $0$, a
    continuous spin structure $(L,\alpha)$ on $Y_U$, and an identification
    \[
        \iota:(L,\alpha)|_{Y_0}
        \xrightarrow{\sim}(L_0,\alpha_0).
    \] 
    This extension is essentially unique in the sense that any other extension agrees with it after restriction to some common neighborhood. 

    \item \emph{Extension of isomorphisms.}
    Let $(L,\alpha_L)$ and $(M,\alpha_M)$ be continuous spin structures
    on $Y$ and let
    \[
        \gamma_0:M|_{Y_0}\xrightarrow{\sim}L|_{Y_0}
    \]
    be an isomorphism of spin curves. Then we can find an open neighborhood $U\subset S$ of $0$, and a
    continuous isomorphism of families of spin curves
    \[
        \gamma:M|_{Y_U}\xrightarrow{\sim}L|_{Y_U}.
    \]

    The extension $\gamma$ is essentially unique. Furthermore, the same holds when $\pi:Y\to S$ is an analytic
    family of Riemann surfaces over a complex analytic space $S$,
    and the two spin structures are analytic. The extension $\gamma$
    is then analytic.

    \item \emph{Extension of complex structure.}
    Let $X$ be a complex manifold, let $N$ be a holomorphic line
    bundle on $X$, and let $L$ be a continuous complex line bundle
    on $X$, equipped with an isomorphism of continuous line bundles
    \[
        \alpha:L^{\otimes 2}\xrightarrow{\sim}N.
    \]
    Then $L$ admits a unique holomorphic structure for which
    $\alpha$ is holomorphic.

    In particular, if $\pi:Y\to S$ is an analytic family of Riemann
    surfaces with $Y$ and $S$ complex manifolds, every continuous
    spin structure
    \[
        \alpha:L^{\otimes 2}\xrightarrow{\sim}\omega_{Y/S}
    \]
    acquires a unique analytic spin structure compatible with
    $\alpha$. Its restriction to each fiber agrees with the
    original holomorphic spin structure on that fiber.
\end{enumerate}

For two marked curve families $\pi_K:K\to S$ and $\pi_Y:Y\to S$,
the corresponding isomorphism-extension statement applies after
fixing an isomorphism of underlying marked curve families
\[
    F:K\xrightarrow{\sim}Y,
    \qquad
    \pi_Y\circ F=\pi_K.
\]
One applies statement~(2) on $K$ to the spin structures $L$ and $F^*M$.
In the analytic setting, $F$ is required to be analytic, and one
applies statement~(3).
\end{lemma}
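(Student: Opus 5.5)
Each of the three parts will be proved by reducing to local statements on trivializing neighbourhoods and then gluing. The gluing uses the fact that the only automorphisms of a spin structure over a connected base are $\pm 1$.

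\emph{Part (1).} Choose a contractible neighbourhood $U$ of $0$ over which the family is topologically trivial, $Y_U\cong Y_0\times U$, through a homeomorphism that is fiberwise compatible with the marking. Spin structures on a Riemann surface correspond bijectively to quadratic refinements, equivalently to classes in the $H^1(Y_0,\z/2\z)$-torsor of square roots of the canonical bundle. These classes are discrete topological data, so they are locally constant in continuous families. Concretely, I will do the following:
\begin{enumerate}
\item Cover $Y_U$ by fiberwise coordinate charts and write the transition functions $k_{ij}$ of $K_{Y/U}$. These are continuous in the base and holomorphic in the fiber.
\item Near $Y_0$, choose continuous square roots $\ell_{ij}$ of the $k_{ij}$, with signs chosen so that at $0$ they are the transition functions of $L_0$.
\item Check the cocycle condition. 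The obstruction $\ell_{ij}\ell_{jk}\ell_{ki}\in\{\pm1\}$ is continuous, hence constant on connected pieces, and it vanishes at $0$. After shrinking $U$ to be connected, it vanishes everywhere.
\end{enumerate}
This produces $(L,\alpha)$ together with the identification $\iota$. For uniqueness, note that two extensions differ by a continuous family of $\z/2$-torsors, i.e.\ a continuous section of the locally constant sheaf $H^1(Y_u,\z/2)$ over $U$. It is trivial at $0$, hence trivial near $0$.

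\emph{Part (2).} $\underline{\on{Isom}}(M,L)$ respecting $\alpha$ is a fiberwise double cover of the base: on each fiber, $\on{Hom}(M_u,L_u)$ is one-dimensional by compactness and the isomorphism class is locally constant by (1), and $\alpha$ cuts out the two square roots. I will argue this as follows:
\begin{enumerate}
\item Over a contractible $U$ on which $M\cong L$ holds abstractly (possible by the uniqueness in (1)), pick any continuous isomorphism $\gamma'$.
\item Compare $\alpha_L\circ\gamma'^{\otimes2}$ with $\alpha_M$. Their ratio $c$ is a nowhere-vanishing function that is holomorphic on compact fibers, hence constant on fibers, hence a continuous function on $U$.
\item Since $U$ is contractible, take the continuous square root of $c^{-1}$ fixed by $\gamma_0$ at $0$, and rescale $\gamma'$ by it to obtain $\gamma$.
\end{enumerate}
Essential uniqueness holds because two such $\gamma$ differ by a continuous $\pm1$-valued function that equals $1$ at $0$. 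In the analytic case, $c$ is holomorphic on $S$, so the square root, and hence $\gamma$, is analytic. The statement for two families $K,Y$ then follows formally by pulling $M$ back along $F$.

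\emph{Part (3).} This is the main point, and the main obstacle is defining the holomorphic structure sheaf. I will declare a local section $s$ of $L$ holomorphic if $\alpha(s^{\otimes 2})$ is a holomorphic section of $N$. The argument has three steps:
\begin{enumerate}
\item On a small open set, choose a continuous nonvanishing section $e$ of $L$. Then $\alpha(e^2)=h\,n$ with $n$ a holomorphic frame of $N$ and $h$ a continuous nonvanishing function.
\item On a simply connected subset, take a continuous square root $h^{1/2}$, and set $e'=h^{-1/2}e$. Then $\alpha(e'^2)=n$, so $e'$ is a continuous frame.
\item Compare overlapping frames. The corresponding frames differ by $\pm\sqrt{n_i/n_j}$, which is continuous and whose square is holomorphic and nonvanishing. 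Such a function is holomorphic, because locally it is one of the two holomorphic branches of the square root and continuity selects one branch on connected sets.
\end{enumerate}
Hence these frames define a holomorphic atlas making $\alpha$ holomorphic. Uniqueness follows because any holomorphic frame $f$ must satisfy $\alpha(f^2)$ holomorphic, which forces $f=\pm e'\cdot(\text{holomorphic})$ locally. Restricting to fibers recovers the fiberwise holomorphic structure by the same uniqueness applied on each fiber.
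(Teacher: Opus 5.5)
Your proposal is correct in outline, but it follows a different route from the paper. The paper's proof treats only part (1), and only for the Teichm\"uller family $\ctop\to\ball$; it leaves (2) and (3) as ``similar in spirit''. It pulls $L_\sigma$ back along a global trivialization $h:\ctop\cong C\times\ball$. It then uses contractibility of $\ball$ (a topological line bundle on $C\times\ball$ is determined by its restriction to $C$) to choose an isomorphism $h^*(K_C\times\ball)\cong K_{\ctop/\ball}$ that is the identity on the central fiber.

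That argument is quick and avoids \v{C}ech bookkeeping. However, $h$ and this isomorphism are merely continuous, so it only yields a topological square root of the relative canonical bundle. The fiberwise holomorphicity of $L$ and $\alpha$ required of a continuous spin family is left implicit (it needs a family version of (3)), and essential uniqueness is not discussed.

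Your construction from continuous square roots $\ell_{ij}$ of the transition functions of $K_{Y/U}$, with the $\pm1$ obstruction killed by connectedness, has fiberwise holomorphic transition functions built in. This is because a continuous square root of a nonvanishing fiberwise holomorphic function is fiberwise holomorphic, by the branch argument in step 3 of your Part (3); you should say this explicitly. Your construction also works locally over an arbitrary base, and it comes with real arguments for uniqueness, (2) and (3). Your Part (3) in particular is a complete proof of a statement the paper does not prove.

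Two places need tightening.

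In Part (1), the cover should be of product type with respect to $Y_U\cong Y_0\times U$, i.e.\ charts $h^{-1}(V_i\times U)$ with $V_i\subset Y_0$ small discs carrying restricted fiberwise coordinates. Only then does every component of every double and triple overlap meet $Y_0$, which is what lets $L_0$ pin down the $\ell_{ij}$ and the sign of the obstruction.

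In Part (2), nothing in your argument supplies an \emph{analytic} $\gamma'$, so ``$c$ is holomorphic on $S$'' is unsupported as written. The fix uses your own branch lemma. The continuous $\gamma$ is locally multiplication by a continuous function $g$ with $g^2=\alpha_M(m^2)/\alpha_L(l^2)$ for holomorphic frames $m,l$. This quotient is holomorphic and nonvanishing, so $g$ is holomorphic. More economically, the spin-compatible isomorphisms $M\to L$ form a double cover of $Y_U$ that has the section $\gamma_0$ over $Y_0$. Since $Y_U\simeq Y_0$, this cover is trivial. That gives existence, essential uniqueness and (with the branch lemma) analyticity at once, and makes $\gamma'$ and the rescaling unnecessary.
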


\begin{proof}
We prove part (1). The proofs of the remaining statements are similar in spirit. 
Fix a homeomorphism $h:\ctop \lgr C \times\ball$ over
$\ball$, with $h(c)=(c,0)$ on the central fiber, and define
$L:=h^*(L_\sigma\times\ball)$.
Then
\[
h^*(L_\sigma^{\otimes2}\times \ball)
\cong
\bigl(h^*(L_\sigma\times\ball)\bigr)^{\otimes2}
=
L^{\otimes2}
\]
and pulling back
$\alpha_\sigma\times\id_{\ball}:
L_\sigma^{\otimes2}\times \ball\lgr K_C\times \ball$
gives an isomorphism
\[
\widetilde\alpha:
L^{\otimes2}\lgr h^*(K_C\times \ball).
\]

The target of this map is isomorphic to $K_{\ctop/\ball}$. Indeed, both restrict to
$K_C$ on the central fiber and, since $\ball$ is contractible,
restriction to that fiber determines the isomorphism class of any
topological complex line bundle on $\ctop$. 
We can therefore choose an isomorphism
$\beta_0:h^*(K_C\times \ball)\lgr K_{\ctop/\ball}$ whose restriction to the central fiber is the identity map. The pair $(L, \beta_0 \circ \tilde{\alpha})$ is then a continuous spin structure on $\ctop$ whose restriction to the central fiber is $(L_{\sigma}, \alpha_{\sigma})$
\end{proof}


Let $\on{Spin}(C)$ be the set of isomorphism classes of spin structures on $C$. We know that the set $\on{Spin}(C)$ has $2^{2g}$ elements and is a torsor for two-torsion line bundles on $C$. Choose a representative $(L_\sigma,\alpha_\sigma)$ for each $\sigma\in\on{Spin}(C)$.

Use one copy $\ball$ of the ball for each spin structure, and write $K_{\sigma} \to \ballsig$ for the continuous family of marked spin curves whose underlying continuous family is $C \to \ball$ and whose spin structure $(L, \alpha)$ is the one extending $(L_{\sigma}, \alpha_{\sigma})$ on the central fiber. We can take a disjoint union of these as we vary over $\sigma \in \on{Spin}(C)$ and define the following continuous family of marked spin curves: 
\begin{equation} \label{cont fams of marked spins} K:=\bigsqcup_{\sigma} K_{\sigma} \to  B:= \bigsqcup_{\sigma} \ballsig\end{equation}

\begin{theorem}\label{teich theorem for spin} The classifying map 
 $B \to ST_g'$ of the family in \eqref{cont fams of marked spins} is surjective, representable and \'etale.  
\end{theorem}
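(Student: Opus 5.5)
We prove surjectivity first, then representability, and then étaleness, which we obtain by showing that $B\to ST_g'$ is a local homeomorphism onto its image after base change.

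\emph{Surjectivity.} Let $(C',[f'],L',\alpha')$ be a marked spin curve over a point. Teichm\"uller's theorem \eqref{teich theorem} gives a unique $q\in\ball$ and a marking-preserving biholomorphism $\phi:C^{top}_q\lgr C'$. Transport $(L',\alpha')$ along $\phi$ to a spin structure on $C^{top}_q$. Every fiber $K_\sigma|_q$ carries the underlying curve $C^{top}_q$. The restrictions of the extended spin structures $(L,\alpha)$ from the $2^{2g}$ copies are pairwise non-isomorphic. To see this, note that $\ballsig$ is contractible, so the difference of two such spin structures is a continuous family of two-torsion line bundles, which is locally constant. Two spin structures that differ on the central fiber therefore differ on every fiber. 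Since $|\on{Spin}(C^{top}_q)|=2^{2g}$, some $\sigma$ gives a fiber isomorphic to the transported spin structure, and hence to $(C',[f'],L',\alpha')$.

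\emph{Representability.} We argue as in Lemma~\ref{lemma :dig is rep}. For two continuous marked spin families over $S$, the functor of marking-preserving isomorphisms has the following structure. On each fiber the underlying curve isomorphism is unique, since a marked curve has no automorphisms; this is the Teichm\"uller input, and it also gives continuity of the curve isomorphism in families. The spin isomorphisms over it form a $\{\pm1\}$-torsor. Part~(2) of Lemma~\ref{the extension lemmas} extends a fiberwise spin isomorphism continuously and essentially uniquely. Together these show that the Isom functor is represented by a double cover of the open locus where the underlying marked curves agree. Hence the diagonal of $ST_g'$ is representable, and so is $B\to ST_g'$.

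\emph{Étaleness.} Let $S\to ST_g'$ classify $(Y\to S,[g_s],M,\alpha_M)$, and consider a point $e=(b,s,\sigma_e)$ of $B_S:=B\times_{ST_g'}S$. The homeomorphism $\Phi$ is the fine moduli map for marked continuous curve families, so it yields a unique continuous map $f:S\to\ball$ and a marking-preserving fiberwise-biholomorphic isomorphism $F:Y\lgr f^*C^{top}$. Suppose $b$ lies in the component $\ballsig$. Part~(2) of Lemma~\ref{the extension lemmas}, applied via $F$ to the spin structures $M$ and $f^*L$, extends the spin isomorphism $\sigma_e$ to a neighborhood $U$ of $s$. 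This gives a section $U\to B_S$ through $e$ over the map $f|_U$ into $\ballsig$. The projection $p:B_S\to S$ is the double cover from the previous step composed with the choice of $\sigma$. Using the essential uniqueness in Lemma~\ref{the extension lemmas}, we show that near $e$ every point of $B_S$ lies on this section. It follows that $p$ restricted to a neighborhood $W$ of $e$ is a homeomorphism onto $U'\subset U$, which is the required local-homeomorphism property.

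We expect the main obstacle to be making precise that the curve part is rigid. The marked-curve factor of $B_S$ must contribute no extra points near $e$: any point of $B_S$ near $e$ must have base point $f(s')$, with curve isomorphism forced to equal $F$. This uses that $T_g'$ is fine, as the Teichm\"uller theorem \eqref{teich theorem} asserts. Only the discrete spin ambiguity $\pm1$ then remains, and continuity together with the uniqueness statements of the Extension Lemma rules it out locally.
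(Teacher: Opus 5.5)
There is a gap in your representability step. Your surjectivity argument is correct: the class of $L_\sigma\otimes L_{\sigma'}^{-1}$ in $H^1(\,\cdot\,,\z/2\z)$ is constant along the contractible base. It also supplies something the paper's proof never addresses explicitly.

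For two families over $S$ with classifying maps $c_1,c_2:S\to T_g'$, the locus where the underlying marked curves agree is the coincidence set $\{c_1=c_2\}$. This set is \emph{closed}, because $T_g'\cong\ball$ is Hausdorff, and in general it is not open. For $\ctop\to\ball$ against the constant family with fiber $C$, it is the single point $0$. Isomorphisms moreover exist only on the smaller locus where the spin structures also match.

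The coincidence set can be an arbitrary closed set. For example, compare $K_\sigma$ with $k^*K_\sigma$ over $\ballsig$, where $k$ is a self-map of $\ballsig$ whose fixed-point set is a closed half-ball $H$. The Isom functor is then $H\times\{\pm1\}$, which is not a topological manifold. So the diagonal of $ST_g'$ is \emph{not} representable in the category the paper works in, and ``hence $B\to ST_g'$ is representable'' does not follow. Your two paragraphs are also inconsistent. If the locus were open in $\ballsig\times S$, the fibers of $p$ would be open subsets of $\ballsig$, and $p$ could not be a local homeomorphism.

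The repair is already latent in your \'etaleness paragraph, and it gives a route genuinely different from the paper's. For $E:=\ballsig\times_{ST_g'}S$ specifically, the curve locus is the graph of $f=\Phisig^{-1}\circ c$, which projects homeomorphically onto $S$. Hence $E$ is the $\{\pm1\}$-torsor of spin isomorphisms over the subset $S_\sigma\subset S$ where $Y_s$ has type $\sigma$. This subset $S_\sigma$ is open by Lemma~\ref{the extension lemmas}(2), and it is closed because, by your counting argument, the $S_\sigma$ partition $S$.

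To topologize $E$ as a covering space of $S_\sigma$, you must prove both halves of local rigidity:
\begin{enumerate}
\item two local extensions of spin isomorphisms that agree at $s$ agree near $s$;
\item two that differ at $s$ differ near $s$.
\end{enumerate}
This gives Hausdorffness, second countability and \'etaleness at once. The second half is exactly the paper's argument that the ratio $u$, with $u^2=1$, is locally constant; the paper uses it to show that $W_{ee'}$ is closed.

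The paper instead builds local sections $\psi_e:U_e\to E$, proves they are open immersions, and glues them using Conrad's gluing and Hausdorff criteria. Your global packaging, once fixed, gives the slightly sharper conclusion that $B\times_{ST_g'}S\to S$ is a two-sheeted covering.
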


\begin{proof}
We first prove that $B \to ST_g'$ is representable. The arguments used to prove representability also show that the map is \'etale. 
It suffices to prove that the component maps $\ballsig \to  ST_g'$ are representable. 
\medskip 

For $S \to ST_g'$ let $(Y \to S, M)$ denote the corresponding continuous family of marked spin curves. Set $E:= \ballsig \times_{ST_g'} S$, and let $p:E \to S$ and $q: E \to \ballsig$ denote the natural projection maps. 

Fix a point $e \in E$, and let $(b=q(e), \ s= p(e), \ \theta_e: (\ksig)_b \lgr Y_s)$ denote the corresponding object, where $\theta_e$ is an isomorphism of marked spin curves. 

We claim that there exist an open neighborhood $U \subset S$ containing $s$, a map $f:U \to \ballsig$, and an isomorphism
\[ F: \fpull K_{\sigma} \lgr Y \vert_U\]
of continuous families of marked spin curves over $U$ such that $F_s= \theta_e$. 

Let $c$ denote the composition $S \to ST_g' \to T_g'=|T_g|$ and let $\Phi_{\sigma}: \ballsig \to T_g'=|T_g|$ be the Teichm\"uller homeomorphism, and let $C_{\sigma} \to \ballsig $ be the corresponding continuous family of Riemann surfaces. Set
\[ f:= \Phisig \inv \circ c\]
and note that $f(s)=b$ since $c(s)=\Phisig(b)$ as elements in $T_g'$. Indeed, these correspond to $(C_{\sigma})_b$ (the continuous curves underlying $(\ksig)_b$) and $Y_s$ respectively, and we know these are isomorphic via $\theta_e$. 
Indeed, $\theta_e$ is an isomorphism of spins so in particular one of the underlying curves. 
By the same logic, for every $t \in S$, the identity $c(t)=\Phisig(f(t))$ implies that there exists an isomorphism
\[ F_t: (C_{\sigma})_{f(t)} \lgr Y_t \]
of marked Riemann surfaces, and since these isomorphisms are unique, they glue to a global isomorphism of continuous families of marked Riemann surfaces
\[ F: \fpull C_{\sigma} \lgr Y, \qquad F \vert_s = F_s\]
over $S$ by \cite[Theorem~5.6]{AC}. By Lemma~\ref{the extension lemmas}, there exists an open neighborhood $U \subset S$ of $s$ and an isomorphism of continuous families of marked spin curves
\[ \Theta: (f|_U)^* \ksig \lgr Y|U, \qquad \Theta_s= \theta_e\]
and the underlying continuous isomorphism is $F|_U$. Redefine $f:= f |_U:U \to \ballsig$, and let 
\[\psi: U \to E\]
denote the classifying map of the object $(i_U:U \subset S, \ f: U \to \ballsig, \ \Theta)$.

We will now prove that $\psi$ is representable by an open immersion. That is, for every $\tau: T \to E$ we will show that there exists an open subset $W \subset T$ which represents $U \times_E T$. Let $a$ denote the composition $T \xra{\tau} E \xra{p} S$, and define
\[ W:= \{ t \in a\inv(U) \mid \tau(t) = \psi(a_U(t)) \}  \subset T \times_S U\]
where $a_U: T \times_S U \to U$ is the projection map. 

We first show that $W$ is open. Fix $t_0 \in W$, and let 
\[ G: (q \circ \tau) \ksig \lgr (p \circ \tau)Y\]
be the isomorphism picked out by $\tau$, and 
\[ a_U^* \Theta: a_U^* f^* \ksig \lgr a_U^*(Y \vert_U)\]
the pullback of the isomorphism $\Theta$ picked out by $\psi$. 
Then by definition of $W$, $G_{t_0} = (a_U^*\Theta)_{t_0}$, and by uniqueness of extension of isomorphisms of marked spin curves on the fibers, there must exist some open subset $N$ containing $t_0$ such that $G \vert_N = (a_U^*\Theta) \vert_N$, and hence $N \subset W$. Thus, $W$ is open, and in particular open in $T$. 

We now verify that $W$ represents $U \times_E T$. Indeed, a map $h: R \to T$ lifts to $R \to U \times_E T$ if and only if $R$ admits a map $u: R \to U$ satisfying $\tau \circ h= \psi \circ u$. The definition of $W$ and the fiber product $E=S \times_{ST_g'} \ballsig$ then immediately imply that the map $h$ must factor through $W$. 

The next step is to prove that $E$ is representable by a topological manifold. Note that once we have established this, we have also proved that the map $E \to S$ is \'etale. Indeed, we have shown that for every $e \in E$, there exists an open subset $V:=\psi(U) \subset E$ which $p$ maps isomorphically onto the open subset $U \subset S$. 
We will use the following descent statement from the open-gluing and Hausdorff criteria in \cite[Theorems~1.1 and~1.3]{conradGluing}: A sheaf $E$ covered by open immersions $\psi_e: U_e \to E$ is represented by a topological manifold if it admits a countable subcover and each canonical map
\[ \rho:=(\pr_1, \pr_2): R \to U_e \cap U_{e'}=U_e \times_S U_{e'}, \]
where $R:=U_e \times_{\psi_e, E, \psi_{e'}} U_{e'}$ denotes the fiber product in the diagram 
\[ \begin{tikzcd}
    R \arrow[r, "\pr_1"] \arrow[d, "\pr_2"] & U_e \arrow[d, "\psi_e"] \\
    U_{e'} \arrow[r,"\psi_{e'}"] & E,
\end{tikzcd}\]
is a closed immersion. 

First note that both $\pr_1$ and $\pr_2$ are open immersions, since they are base changes of the open immersions $\psi_{e'}$ and $\psi_e$ respectively. Set $O:= \pr_2(R) \subset U_{e'}$, and let $W_{ee'}:= j\inv(O)$, where
$j: U_e \cap U_{e'} \to U_{e'}$ is the inclusion map. The map
\[ j \vert_{W_{ee'}}: W_{ee'} \to O\]
is a homeomorphism, and hence  $\rho: R \to W_{ee'}$ is a homeomorphism. 

We are left to show that $W_{ee'}$ is closed. Let 
\[ f_e: U_e \to \ballsig, \qquad f_{e'}: U_{e'} \to \ball_{\sigma'}\] 
and note that $U_e \cap U_{e'} = \emptyset$ if $\sigma \neq \sigma'$, so we may assume that $\sigma=\sigma'$. Then clearly 
\[ f_e \vert_{U_e \cap U_{e'}} = f_{e'} \vert_{U_e \cap U_{e'}}\]
since  $f_e:= \Phisig \inv \circ c \circ i_{U_e}$ and $f_{e'}:= \Phisig \inv \circ c \circ i_{U_{e'}}$. Let us denote the common map by
\[f: U_e \cap U_{e'} \to \ballsig,\]
and let $Z= f^* C_{\sigma} \to U_e \cap U_{e'}$ be the underlying continuous family of marked curves.  
Recall the isomorphisms of continuous families of marked Riemann surfaces underlying $\Theta_e$ and $\Theta_{e'}$: 
\[ F_e: f_e^* C_{\sigma} \to Y, \qquad F_{e'}: f_{e'}^* C_{\sigma} \to Y. \]
Since isomorphisms of marked Riemann surfaces are unique, $F_e \vert_{U_e \cap U_{e'}}= F_{e'} \vert_{U_e \cap U_{e'}}: Z \lgr Y \vert_{U_e \cap U_{e'}}$.   

Write $V:=U_e\cap U_{e'}$, let $\pi:Z\to V$ be the projection, and denote the common underlying isomorphism by
\[
    F:Z\lgr Y_V, \qquad Y_V:=Y\vert_V.
\]
Let $(L,\alpha_L)$ denote the spin structure on $Z$ belonging to $f^*K_{\sigma}$, and write $M_V:=M\vert_{Y_V}$. The restrictions of $\Theta_e$ and $\Theta_{e'}$ to $V$ determine isomorphisms of line bundles on $Z$
\[
    \gamma_e,\gamma_{e'}:F^*M_V\lgr L
\]
compatible with the spin structures. Their ratio
\[
    h:=\gamma_{e'}\circ\gamma_e^{-1}:L\lgr L
\]
is therefore an automorphism of the spin structure $(L,\alpha_L)$. Since $L$ is a line bundle, there is a unique continuous, fiberwise holomorphic function $u:Z\to\mathbb{C}^{\times}$ such that $h=u\,\operatorname{id}_L$. Compatibility with $\alpha_L$ gives $u^2=1$, so $u$ is locally constant with values $1$ and $-1$.

Define the open subset
\[
    D:=\{z\in Z:u(z)\neq 1\}\subset Z.
\]
We claim that
\[
    V\setminus W_{ee'}=\pi(D).
\]
Indeed, for $s\in V$, the objects $\psi_e(s)$ and $\psi_{e'}(s)$ have the same base point $s$, the same point $f(s)\in\ballsig$, and the same underlying marked-curve isomorphism $F_s$. Thus, writing $Z_s:=\pi^{-1}(s)$, the definition of the fiber product gives
\[
\begin{aligned}
    s\in W_{ee'}
    &\Longleftrightarrow \psi_e(s)=\psi_{e'}(s)\\
    &\Longleftrightarrow \gamma_e\vert_{Z_s}=\gamma_{e'}\vert_{Z_s}\\
    &\Longleftrightarrow u\vert_{Z_s}\equiv 1.
\end{aligned}
\]
Consequently, $s\notin W_{ee'}$ if and only if there exists $z\in Z_s$ with $u(z)\neq 1$, which is precisely the condition $s\in\pi(D)$. Since $Z_s$ is connected, in this case $u\vert_{Z_s}\equiv -1$.

Finally, the projection $\pi:Z\to V$ is open: the local product charts in the definition of a continuous family identify its restrictions with projections from products, which are open maps. Hence $\pi(D)$ is open in $V$. The displayed equality shows that $W_{ee'}$ is closed in $V=U_e\cap U_{e'}$. Under the identification $R\cong W_{ee'}$ established above, the map $\rho:R\to U_e\cap U_{e'}$ is therefore a closed immersion, as required.

\end{proof}

\subsection{The analytic spin family and the lift to super Teichm\"uller space}

Transport the complex structure of $T_g$ along $\Phi_\sigma:\ballsig\lgr |T_g|$, and denote the resulting complex manifold by $\ballsig^{\on{an}}$. The same map is then a biholomorphism $\Phi_\sigma^{\on{an}}:\ballsig^{\on{an}}\lgr T_g$.
Let $\pi_{\sigma}: C_{\sigma} \to \ballsig$ denote the continuous family of marked curves corresponding to the Teichm\"uller homeomorphism $\Phisig$. 

Let $q_g:\cc_g\to T_g$ be the universal family of marked Riemann surfaces and let $q_\sigma:X_\sigma:= \cc_g \times_{T_g} \ballsig^{an} \to\ballsig^{\on{an}}$ be the base change along the biholomorphism $\Phi_\sigma^{\on{an}}$. The continuous family underlying $q_{\sigma}: X_\sigma \to \ballsig^{an}$ and $\pi_\sigma:C_\sigma\to\ballsig$ have the same classifying map $\Phi_\sigma$. By \cite[Theorem~5.6]{AC}, there is a unique isomorphism of continuous marked families
\[
    H_\sigma:C_\sigma\lgr X_\sigma,
    \qquad q_\sigma \circ H_\sigma=\pi_\sigma.
\]
Transporting the complex structure along $H_\sigma$, we make $\pi_\sigma: C_\sigma \to \ballsig$ into an analytic family of marked curves. Denote it by $\pi_\sigma^{\on{an}}:C_\sigma^{\on{an}}\to\ballsig^{\on{an}}$. The original complex structures on its fibers are compatible with the complex structure transported from $H_\sigma$. Furthermore, by Lemma~\ref{the extension lemmas}, the complex structure on $C_{\sigma}$ extends to a complex structure on its continuous spin structure $(L, \alpha)$. We write  $K_\sigma=(C_\sigma^{\on{an}},L,\alpha)\to\ballsig^{\on{an}}$ for the resulting analytic family of marked spin curves. Taking the disjoint union of these families and their classifying maps into $ST_g$ defines a map 
\begin{equation}\label{analytic spin atlas}
B^{\on{an}}:=\bigsqcup_\sigma\ballsig^{\on{an}}
    \longrightarrow ST_g.
\end{equation}


\begin{prop}\label{prop:analytic-spin-atlas}
The map \eqref{analytic spin atlas} is surjective, representable and \'etale.
\end{prop}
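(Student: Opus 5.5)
The plan is to mirror the proof of Theorem~\ref{teich theorem for spin}, now in the analytic category, with the Teichm\"uller homeomorphism replaced by the biholomorphism $\Phi_\sigma^{\on{an}}:\ballsig^{\on{an}}\lgr T_g$. Surjectivity is immediate. Given a marked spin curve $(C',[f'],L',\alpha')$, choose $b$ with $\Phi_\sigma^{\on{an}}(b)=[C',f']$ for some $\sigma$. The fibre $(K_\sigma)_b$ is then a marked curve isomorphic to $(C',[f'])$. Spin structures on the fibres of $C_\sigma\to\ballsig$ are transported along the continuous family, and as $\sigma$ runs over $\on{Spin}(C)$ they exhaust the $2^{2g}$ spin structures on each fibre. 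Hence some $\sigma$ gives $(K_\sigma)_b\cong(C',L',\alpha')$ compatibly with the marking. Representability is automatic once we know $ST_g$ has representable diagonal. The proof of Lemma~\ref{lemma :dig is rep} applies verbatim to spin families, since by Lemma~\ref{lem:spin-bosonic} these are the bosonic part of $\tg$.

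For \'etaleness, fix $S\to ST_g$ with corresponding analytic family $(Y\to S,M)$, and put $E:=\ballsig^{\on{an}}\times_{ST_g}S$ with projections $p$ and $q$. Take $e=(b,s,\theta_e)$ in $E$. The composite $c:S\to ST_g\to T_g$ (forget the spin structure) is holomorphic, so $f:=(\Phi_\sigma^{\on{an}})^{-1}\circ c$ is holomorphic and $f(s)=b$. The universal property of $T_g$ (uniqueness of marked isomorphisms) gives an analytic isomorphism of marked curve families $F:f^*C_\sigma^{\on{an}}\lgr Y$. Now apply the analytic version of Lemma~\ref{the extension lemmas}(2) to the spin structures $f^*L$ and $F^*M$, extending $\theta_e$. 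This yields a neighbourhood $U\ni s$ and an analytic isomorphism $\Theta:(f|_U)^*K_\sigma\lgr Y|_U$ with $\Theta_s=\theta_e$, which defines a section $\psi:U\to E$ with $p\circ\psi=\id_U$.

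The main step is to show that $\psi$ is an open immersion, i.e.\ that $p$ is a local isomorphism near $e$. Given $\tau:T\to E$, set
\[
W:=\{t\in a^{-1}(U): \tau(t)=\psi(a_U(t))\}
\]
as before. The two spin isomorphisms over $T$ differ by a function $u$ with $u^2=1$, which is therefore locally constant. So $W$ is open and closed in $a^{-1}(U)$, and $W$ represents $U\times_E T$. The delicate point, which I expect to be the main obstacle, is that $T$ may be non-reduced. One must check that the equality $\tau=\psi\circ a_U$ holds scheme-theoretically on $W$, not just pointwise. Two things are needed for this. First, the base maps agree because both are forced to equal $f\circ p$ by the universal property of $T_g$. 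Second, the line bundle automorphisms agree because $u$ is a section of $\pi_*\mc{O}^{\times}=\mc{O}_T^{\times}$ satisfying $u^2=1$, so it equals $\pm1$ locally even over non-reduced bases. Taking the union over points $e$ of the images $\psi(U)$ then exhibits $p:E\to S$ as a local isomorphism, and the representability of $E$ as an analytic space follows by gluing. Alternatively, it follows directly from the representable diagonal. Hence the map \eqref{analytic spin atlas} is \'etale.
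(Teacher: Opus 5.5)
Your proposal is correct and follows essentially the same route as the paper: representability from the representable diagonal of $ST_g$, and \'etaleness by building the local sections $\psi_e$ from $f=(\Phi_\sigma^{\on{an}})^{-1}\circ c$ and the analytic extension lemma, then rerunning the open-immersion argument of Theorem~\ref{teich theorem for spin}. Your direct surjectivity argument and your scheme-theoretic check over non-reduced bases (where $u\in\mc{O}_T^{\times}$ with $u^2=1$ is locally $\pm1$) make explicit points that the paper leaves implicit.
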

\begin{proof}
Representability follows from the representable diagonal of the analytic Deligne--Mumford stack $ST_g$. We prove \'etaleness componentwise. Given an analytic family $\mathcal{Y}=(Y\to S,M,\alpha_M)$ classified by $S\to ST_g$, set $E:=\ballsig^{\on{an}}\times_{ST_g}S$, with projections $p:E\to S$ and $q:E\to\ballsig^{\on{an}}$.

Fix $e=(s,b,\theta_e)\in E$, where $s=p(e)$, $b=q(e)$ and $\theta_e:(K_\sigma)_b\lgr\mathcal{Y}_s$. If $c:S\to T_g$ is the analytic classifying map of the underlying marked curve family, then
\[
    f:=(\Phi_\sigma^{\on{an}})^{-1}\circ c:S\to\ballsig^{\on{an}}
\]
is analytic and satisfies $f(s)=b$. We obtain an isomorphism $F:f^*C_\sigma^{\on{an}}\lgr Y$ of families of marked curves over $S$ whose restriction at $s$ is  $\theta_e$. By Lemma~\ref{the extension lemmas} we can find an open neighborhood $U\subset S$ of $s$, and an isomorphism
\[
    \Theta:(f\vert_U)^*K_\sigma\lgr\mathcal{Y}\vert_U,
    \qquad \Theta_s=\theta_e.
\]
of marked families of spin curves. 
The triple $(i_U,f\vert_U,\Theta)$ defines $\psi_e:U\to E$, with $i_U:U\hookrightarrow S$ the inclusion and $p\circ\psi_e=i_U$. The same open-immersion argument as in Theorem~\ref{teich theorem for spin} can then be used to show that each $\psi_e$ is an open immersion, and thus $E \to S$ is \'etale.  Finally, the surjectivity in Theorem~\ref{teich theorem for spin}, together with these local analytic lifts, proves surjectivity of \eqref{analytic spin atlas}.
\end{proof}

For a Deligne--Mumford superstack, the map $\mc{M}_{\bos}\to\mc{M}$ is a representable closed immersion.
\begin{theorem}\label{teich theorem for super}
The super Teichm\"uller stack $\tg$ admits an \'etale atlas
\[
\mc{P}\longrightarrow\tg
\]
whose bosonic reduction is \eqref{analytic spin atlas}. In particular, 
\[
|\mc{P}|\cong\bigsqcup_{\sigma\in\on{Spin}(C)}\ball_\sigma.
\]
Thus its underlying topological space is homeomorphic to a disjoint union of $2^{2g}$ contractible unit balls.
\end{theorem}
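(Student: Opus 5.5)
We construct $\mc{P}$ by lifting, chart by chart, the analytic spin atlas \eqref{analytic spin atlas} along the closed immersion $ST_g\simeq(\tg)_{\bos}\hra\tg$ of Lemma~\ref{lem:spin-bosonic}. For each $\sigma\in\on{Spin}(C)$ and each $b\in\ballsig^{\on{an}}$, the spin curve $(K_\sigma)_b$ with its marking is a marked super Riemann surface. By Proposition~\ref{super teich is DM} it has a marked Kuranishi family $\mcx^{b}\to(B^b,b_0)$ whose classifying map $B^b\to\tg$ is representable and \'etale. Its bosonic reduction $B^b_{\bos}\to ST_g$ is then \'etale, being a base change of an \'etale map along $ST_g\hra\tg$. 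Both $B^b_{\bos}\to ST_g$ and $\ballsig^{\on{an}}\to ST_g$ (Proposition~\ref{prop:analytic-spin-atlas}) are \'etale at the points $b_0$ and $b$. The fiber product $\ballsig^{\on{an}}\times_{ST_g}B^b_{\bos}$ therefore contains a point lying over $(b,b_0)$ near which both projections are open immersions. This yields, after shrinking, an open $W_b\ni b$ in $\ballsig^{\on{an}}$ and an open embedding $W_b\hra B^b_{\bos}$ over $ST_g$, which identifies $W_b$ with an open $V_b\subset |B^b|$.

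Next we define the super chart $\mc{P}_b$ to be the open subsuperspace of $B^b$ with underlying set $V_b$. Its bosonic reduction is $W_b$, and $\mc{P}_b\to\tg$ is \'etale because it is the restriction of an \'etale map to an open subspace. The main step is to glue the $\mc{P}_b$ along their overlaps into a superspace $\mc{P}_\sigma$ with $(\mc{P}_\sigma)_{\bos}=\ballsig^{\on{an}}$. For $b,b'$ we form $\mc{P}_b\times_{\tg}\mc{P}_{b'}$. Since both maps are \'etale and $\tg$ has representable diagonal (Lemma~\ref{lemma :dig is rep}), this fiber product is \'etale over each factor. Its bosonic reduction is $W_b\times_{ST_g}W_{b'}$, which contains the diagonal copy of $W_b\cap W_{b'}$ as an open and closed piece. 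That piece is open and closed by the argument of Theorem~\ref{teich theorem for spin}: the set where the two spin identifications agree is the complement of the open set where $u=-1$, and it is open by uniqueness of extensions.

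By topological invariance of the \'etale site, or equivalently by unique \'etale lifting of infinitesimal thickenings, the open and closed piece lifts uniquely to an open subsuperspace of $\mc{P}_b\times_{\tg}\mc{P}_{b'}$. This lift projects isomorphically onto open subsuperspaces of $\mc{P}_b$ and of $\mc{P}_{b'}$ lying over $W_b\cap W_{b'}$, and so it defines the transition isomorphisms. The cocycle condition on triple overlaps holds by the same uniqueness of \'etale lifts, because it already holds on bosonic reductions, where all maps are identities of subsets of $\ballsig^{\on{an}}$. We expect this step to be the main obstacle. We must make sure that the lift over the diagonal really is an isomorphism onto both factors and not merely \'etale, and we must keep track of the fermionic reflection: the fiber product also contains the component where $u=-1$, and we must choose the component consistently. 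The gluing produces $\mc{P}_\sigma$ together with a map $\mc{P}_\sigma\to\tg$ that is \'etale locally, hence \'etale, and whose bosonic reduction is $\ballsig^{\on{an}}\to ST_g$. The map is well defined because the gluing data are isomorphisms over $\tg$, which rigidifies the $2$-morphisms. Alternatively, one can invoke Batchelor-type splitting over the contractible Stein ball, but we would avoid this.

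Finally we set $\mc{P}=\bigsqcup_\sigma\mc{P}_\sigma$. For surjectivity, note that a point of $\tg$ is a marked spin curve, hence a point of $ST_g$, and $B^{\on{an}}\to ST_g$ is surjective by Proposition~\ref{prop:analytic-spin-atlas}. Representability is automatic from Lemma~\ref{lemma :dig is rep}. Then $|\mc{P}|=|\mc{P}_{\bos}|=\bigsqcup_\sigma|\ballsig^{\on{an}}|\cong\bigsqcup_\sigma\ballsig$ via the Teichm\"uller homeomorphisms $\Phi_\sigma$. Since $\on{Spin}(C)$ has $2^{2g}$ elements, $|\mc{P}|$ is a disjoint union of $2^{2g}$ contractible unit balls.
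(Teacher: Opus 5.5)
Your proposal is correct, but it takes a different route from the paper. The paper's proof is a single appeal to \'etale lifting along the closed immersion $ST_g\simeq(\tg)_{\bos}\hra\tg$. It takes the whole \'etale map $B^{\on{an}}\to ST_g$ of Proposition~\ref{prop:analytic-spin-atlas} and asserts that it extends to an \'etale $\mc{P}\to\tg$ with $\mc{P}\times_{\tg}ST_g\cong B^{\on{an}}$. In other words, it uses the existence half of topological invariance of the \'etale site as a black box.

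You instead build the lift by hand. The local lifts come from open pieces of the marked Kuranishi bases of Proposition~\ref{super teich is DM}. The only lifting input you need is the uniqueness half: morphisms and sections over $\tg$ between \'etale objects are determined by their bosonic restrictions. In the analytic category this is just the fact that \'etale maps are local isomorphisms. In effect, you prove the needed case of \'etale lifting from the Kuranishi atlas.

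The paper's route buys brevity. Yours buys three things:
\begin{itemize}
\item Self-containedness: the paper neither proves nor cites a lifting theorem for analytic Deligne--Mumford superstacks.
\item An explicit description of $\mc{P}_\sigma$ glued over the fixed topological ball, so Hausdorffness and $|\mc{P}_\sigma|=\ballsig$ are automatic.
\item An explicit treatment of the fermionic reflection, via choosing the identity sheet of $\ballsig^{\on{an}}\times_{ST_g}\ballsig^{\on{an}}\cong\ballsig^{\on{an}}\sqcup\ballsig^{\on{an}}$.
\end{itemize}

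Both steps you flag as obstacles are easier than you expect.
\begin{itemize}
\item Lifting the open and closed piece needs no invariance theorem: any open subset of $|\mc{P}_b\times_{\tg}\mc{P}_{b'}|$ already defines an open subsuperspace. Its projection to $\mc{P}_b$ is \'etale and injective on points, and an injective local isomorphism is an open embedding.
\item For the cocycle condition, including the $2$-isomorphisms, compare the two composites as sections of the \'etale map $\mc{P}_b\times_{\tg}\mc{P}_{b''}\to\mc{P}_b$. Both land in the identity sheet, since $(+1)(+1)=+1$. So they agree on bosonic reductions as maps into the fiber product, not merely as maps of subsets of $\ballsig^{\on{an}}$, and hence they agree.
\end{itemize}
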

\begin{proof}
By Lemma~\ref{lem:spin-bosonic}, the bosonic reduction of $\tg$ is $ST_g$. By \'etale lifting, there exists a complex analytic superspace $\mc{P}$, and an \'etale map $\mc{P} \to \tg$ filling in the cartesian square
\[
\begin{tikzcd}
\pan \arrow[r] \arrow[d,"p^{\on{an}}"'] & \mc{P} \arrow[d] \\
ST_g \arrow[r] & \tg.
\end{tikzcd}
\]

\end{proof}

\section*{Acknowlegments} I want to thank Daniel Hernández Ruipérez for his helpful comments on the canonical locus in the Hilbert superscheme and the representability of the Quot superscheme.

\bibliographystyle{amsalpha}
\bibliography{References}

@article{ott2023supermoduli,
  title={The supermoduli space of genus zero super {Riemann} surfaces with {Ramond} punctures},
  author={Ott, Nadia and Voronov, Alexander A.},
  journal={Journal of Geometry and Physics},
  volume={185},
  pages={104726},
  year={2023},
  publisher={Elsevier}
}

@article{Grothendieck1961,
  author = {Grothendieck, Alexander},
  title = {Techniques de construction en g{\'e}om{\'e}trie analytique. {X}. Construction de l'espace de {Teichm{\"u}ller}},
  journal = {S{\'e}minaire Henri Cartan},
  volume = {13},
  number = {2},
  year = {1960--1961},
  pages = {1--20},
  note = {Expos{\'e} no.~17},
  url = {https://www.numdam.org/item/SHC_1960-1961__13_2_A4_0/}
}

@article{AC,
  author = {Arbarello, Enrico and Cornalba, Maurizio},
  title = {{Teichm{\"u}ller} space via {Kuranishi} families},
  journal = {Annali della Scuola Normale Superiore di Pisa, Classe di Scienze},
  series = {5},
  volume = {8},
  number = {1},
  year = {2009},
  pages = {89--116},
  url = {https://www.numdam.org/item/ASNSP_2009_5_8_1_89_0/}
}

@article{CR,
  author = {Crane, Louis and Rabin, Jeffrey M.},
  title = {Super {Riemann} surfaces: uniformization and {Teichm{\"u}ller} theory},
  journal = {Communications in Mathematical Physics},
  volume = {113},
  number = {4},
  year = {1988},
  pages = {601--623},
  doi = {10.1007/BF01223239}
}

@article{LR,
  author = {LeBrun, Claude and Rothstein, Mitchell},
  title = {Moduli of super {Riemann} surfaces},
  journal = {Communications in Mathematical Physics},
  volume = {117},
  number = {1},
  year = {1988},
  pages = {159--176},
  doi = {10.1007/BF01228415}
}

@article{CV,
  author = {Codogni, Giulio and Viviani, Filippo},
  title = {Moduli and periods of supersymmetric curves},
  journal = {Advances in Theoretical and Mathematical Physics},
  volume = {23},
  number = {2},
  year = {2019},
  pages = {345--402},
  doi = {10.4310/ATMP.2019.v23.n2.a2},
  eprint = {1706.04910},
  archivePrefix = {arXiv},
  primaryClass = {math.AG}
}

@article{PZ,
  author = {Penner, R. C. and Zeitlin, Anton M.},
  title = {Decorated super-{Teichm{\"u}ller} space},
  journal = {Journal of Differential Geometry},
  volume = {111},
  number = {3},
  year = {2019},
  pages = {527--566},
  doi = {10.4310/jdg/1552442609}
}

@article{rabin1987teichmuller,
  title={{Teichm{\"u}ller} deformations of super {Riemann} surfaces},
  author={Rabin, Jeffrey M.},
  journal={Physics Letters B},
  volume={190},
  number={1--2},
  pages={40--46},
  year={1987},
  publisher={Elsevier}
}

@article{goldman1988topological,
  title={Topological components of spaces of representations},
  author={Goldman, William M.},
  journal={Inventiones mathematicae},
  volume={93},
  number={3},
  pages={557--607},
  year={1988},
  publisher={Springer}
}

@book{teichmuller1940extremale,
  title={Extremale quasikonforme {Abbildungen} und quadratische {Differentiale}},
  author={Teichm{\"u}ller, Oswald},
  year={1940},
  publisher={Verlag der Akademie der Wissenschaften}
}

@article{sachse2009global,
  title={Global analytic approach to super {Teichm{\"u}ller} spaces},
  author={Sachse, Christoph},
  journal={arXiv preprint arXiv:0902.3289},
  year={2009}
}

@article{knevel2011versal,
  title={Versal Families of Compact Super {Riemann} Surfaces},
  author={Knevel, Roland},
  journal={arXiv preprint arXiv:1103.0664},
  year={2011}
}

@article{bruzzo2023notes,
  title={Notes on fundamental algebraic supergeometry. {Hilbert} and {Picard} superschemes},
  author={Bruzzo, Ugo and Ruip{\'e}rez, Daniel Hern{\'a}ndez and Polishchuk, Alexander},
  journal={Advances in Mathematics},
  volume={415},
  pages={108890},
  year={2023},
  publisher={Elsevier}
}

@article{ott2026algebraicity,
  title={Algebraicity of supermoduli of curves via {Artin}'s criteria},
  author={Ott, Nadia},
  journal={arXiv preprint arXiv:2603.16254},
  year={2026}
}

@misc{conradGluing,
  author = {Conrad, Brian},
  title = {Gluing topologies, the {Hausdorff} condition, and examples},
  howpublished = {Math 396 course handout, Stanford University},
  url = {https://math.stanford.edu/~conrad/diffgeomPage/handouts/gluing.pdf}
}

@unpublished{POP,
  author = {Ott, Nadia and Polishchuk, Alexander and Peng, Fei},
  title  = {Projectivity of coarse moduli of supercurves},
  note   = {Unpublished manuscript},
  year   = {2026}
}

@unpublished{ADT,
  author = {Alvarez-Cónsul, Luis and Ruip{\'e}rez, Daniel Hern{\'a}ndez  and Gómez, Tomás},
  title  = {The moduli superstack of semistable coherent sheaves on a projective superscheme},
  note   = {Unpublished manuscript},
  year   = {2026}
}

@article{bryant1993nielsen,
  author = {Bryant, P. and Hodgkin, L.},
  title = {{Nielsen}'s theorem and the super-{Teichm{\"u}ller} space},
  journal = {Annales de l'Institut Henri Poincar\'e. Physique Th\'eorique},
  volume = {58},
  number = {3},
  year = {1993},
  pages = {247--265},
  url = {https://www.numdam.org/item/AIHPA_1993__58_3_247_0/}
}











\end{document}